\documentclass[pdflatex,sn-mathphys-num]{sn-jnl}

\usepackage[utf8]{inputenc}
\usepackage[T1]{fontenc}
\usepackage{amsmath,amssymb,amsthm,mathtools}
\usepackage{bm}
\usepackage{enumitem}
\usepackage{graphicx}
\usepackage{booktabs}
\usepackage{caption}

\usepackage{xcolor} 

\hypersetup{hidelinks} 

\newtheorem{theorem}{Theorem}[section]
\newtheorem{proposition}[theorem]{Proposition}
\newtheorem{lemma}[theorem]{Lemma}

\theoremstyle{definition}
\newtheorem{problem}[theorem]{Problem}

\theoremstyle{remark}
\newtheorem{remark}[theorem]{Remark}

\newcommand{\R}{\mathbb{R}}
\newcommand{\dd}{\,\mathrm{d}}

\begin{document}

\title{\bfseries Age-Structured Harvesting Models: A Structural Comparison of Rate-Control and Effort-Control Optimality Systems}

\title{\bfseries Age-Structured Harvesting Models: A Structural Comparison of Rate-Control and Effort-Control Optimality Systems}

\author[1,2]{\fnm{Jiguang} \sur{Yu}}
\equalcont{All authors contributed equally as co-first authors.}

\author*[3]{\fnm{Louis Shuo} \sur{Wang}}\email{wang.s41@northeastern.edu; ORCID: 0009-0007-0977-7894}
\equalcont{All authors contributed equally as co-first authors.}

\author*[4]{\fnm{Ye} \sur{Liang}}\email{ye-liang@uiowa.edu; ORCID: 0009-0000-2220-319X}
\equalcont{These authors contributed equally to this work as co-first authors.}




\affil[1]{\orgdiv{College of Engineering},
  \orgname{Boston University},
  \orgaddress{\city{Boston}, \postcode{02215}, \state{MA}, \country{USA}}}

\affil[2]{\orgdiv{Graduate Summer Fellow of Institute for Global Sustainability},
  \orgname{Boston University},
  \orgaddress{\city{Boston}, \postcode{02215}, \state{MA}, \country{USA}}}

\affil[3]{\orgdiv{Department of Mathematics},
  \orgname{Northeastern University},
  \orgaddress{\city{Boston}, \postcode{02115}, \state{MA}, \country{USA}}}

\affil[4]{\orgdiv{College of Engineering},
  \orgname{The University of Iowa},
  \orgaddress{\city{Iowa City}, \postcode{52242}, \state{IA},\country{USA}}}

\abstract{
We study optimal harvesting in continuous-time, age-structured population models of McKendrick--von Foerster type, and we compare two harvesting mechanisms. In the \emph{rate-control} formulation, harvesting enters the state equation as an additive removal term; in the \emph{effort-control} formulation, harvesting acts multiplicatively as an additional mortality intensity and the mortality coefficient depends on the aggregate stock. For the rate-control problem we first establish existence of an optimal control for the infinite-horizon discounted problem, and we then derive, \emph{under explicitly stated regularity and constraint-qualification assumptions}, a conditional Pontryagin-type necessary optimality system consisting of the adjoint equation, the terminal-age and transversality
conditions, the switching relations for the distributed and boundary controls, and the complementary-slackness relation for the state constraint. For the effort-control problem we \emph{formally} derive the associated adjoint equation and identify the nonlocal coupling term generated by aggregate (density) dependence, with the sign of that term
verified by a step-by-step variational computation; a rigorous infinite-horizon maximum principle for this nonlinear, nonlocally coupled problem is beyond the present scope and is stated as such. We complement the analysis with autonomous stationary reductions, with explicit representations of the state and adjoint, and with a reproducible numerical
study. The central message is structural: the harvesting mechanism is not a cosmetic modelling choice. Rate-control produces an additive/affine/local optimality structure, whereas effort-control produces a multiplicative/nonlinear/nonlocal one, with direct consequences for persistence, stationary profiles, and bioeconomic interpretation.
}

\keywords{Age-structured population dynamics;
Optimal harvesting;
Infinite-horizon optimal control;
McKendrick-von Foerster equation;
Pontryagin maximum principle}

\maketitle

\section{Introduction}

Age-structured population models provide a classical framework for
describing the evolution of heterogeneous populations in which the
demographic state depends explicitly on age. In continuous time, such
dynamics are typically formulated through McKendrick--von Foerster type
transport equations
\citep{mkendrick_applications_1925,von_foerster_remarks_1959,liu2025bidirectional}, which have
become a standard tool in population dynamics, demography, resource
management, and harvesting theory. A systematic mathematical treatment of
these models is developed in the foundational works of Webb
\citep{webb_semigroup_1984,webb_theory_1985,liang2026separation}, Iannelli
\citep{iannelli_mathematical_1995}, Metz et al.\
\citep{metz_dynamics_1986}, and Iannelli and Milner
\citep{iannelli_basic_2017}. Within this framework, deriving optimal
harvesting policies under dynamical constraints is a central question.

In this paper we focus on two harvesting mechanisms within a common
age-structured framework. The first is a \emph{rate-control} formulation,
in which harvesting enters the state equation as a direct removal term. The
second is an \emph{effort-control} formulation, in which harvesting acts as
an additional mortality intensity and the instantaneous yield is
proportional to both effort and stock abundance; the natural mortality
coefficient is moreover allowed to depend on the aggregate stock, which
renders the dynamics nonlinear and nonlocal. For the rate-control problem we
prove existence of an optimiser for the infinite-horizon discounted problem
and then derive necessary optimality conditions of Pontryagin type --- the
adjoint system, the transversality condition, the switching relations for
the distributed harvesting control and the boundary inflow control, and the
complementary-slackness relation for the state constraint --- under
explicitly stated regularity and constraint-qualification assumptions. We
then compare the two formulations and exhibit the nonlocal coupling term
that appears in the effort-control adjoint equation, thereby making explicit
how the choice of harvesting mechanism changes the analytical structure of
the optimality system.

\paragraph{Precise mathematical contribution.}
To position the present work relative to the existing optimal-harvesting
literature, we state our contribution sharply, separating what is new from
what is classical.
\begin{enumerate}[label=(\roman*),leftmargin=2.2em]
\item The individual ingredients of the rate-control optimality system ---
an affine McKendrick transport equation, a backward adjoint equation with
terminal condition $\lambda(t,A)=0$, bang-bang switching laws, and
complementary slackness for the state constraint --- are individually
classical
\citep{brokate_pontryagins_1985,barbu_optimal_1999,anita_analysis_2000,yu2026size}.
Our contribution is therefore \emph{not} a new maximum principle for a single
model, but the explicit side-by-side derivation that isolates how the
harvesting mechanism \emph{alone} alters the structure of the optimality
system.
\item For the rate-control problem we additionally establish existence of an
optimal control for the infinite-horizon discounted problem
(Theorem~\ref{thm:existence_PR}), which anchors the necessary conditions to
an actual optimiser rather than presenting them in the abstract.
\item The nonlocal adjoint term for the effort-control model is, in our
setting, a transparent consequence of density-dependent mortality. It is
consistent with known density-dependent age-structured control results
\citep{boucekkine_optimal_2013} rather than a wholly new phenomenon; we
present it as a \emph{formal} identification accompanied by an explicit,
sign-verified derivation (Section~\ref{sec:effort}). A rigorous
infinite-horizon maximum principle for the effort-control model is not
claimed and is left as an open problem.
\end{enumerate}
The value added is thus the \emph{unified comparison} together with the
precise localisation of the dichotomy
\begin{equation*}
\begin{aligned}
\text{rate-control}\;&\Rightarrow\;\text{additive / affine / local},\\[4pt]
\text{effort-control}\;&\Rightarrow\;\text{multiplicative / nonlinear /
nonlocal},
\end{aligned}
\end{equation*}
the existence result that grounds the rate-control conditions, and a
reproducible numerical comparison of the two mechanisms.

Figure~\ref{fig:mechanism_overview} provides a mechanistic map of the paper.
The paper is organised as follows. Section~\ref{sec:litreview} reviews the
relevant literature, organised around the central comparison.
Section~\ref{sec:model} introduces the two model classes, their real-world
interpretation, and the standing assumptions. Section~\ref{sec:optcontrol}
formulates the infinite-horizon optimal control problem for the rate-control
model, establishes existence of an optimiser, and derives the conditional
necessary optimality system. Section~\ref{sec:effort} treats the
effort-control formulation and the associated formal adjoint equation with
nonlocal dependence. Section~\ref{sec:autonomous} collects autonomous
reductions and explicit stationary formulas and relates them to the optimal
control problem. Sectio~\ref{sec:numerics} presents the numerical study.
Section~\ref{sec:discussion} concludes, separating the proven results from
the formal observations.

\begin{figure}[htbp]
\centering
\includegraphics[width=0.65\linewidth]{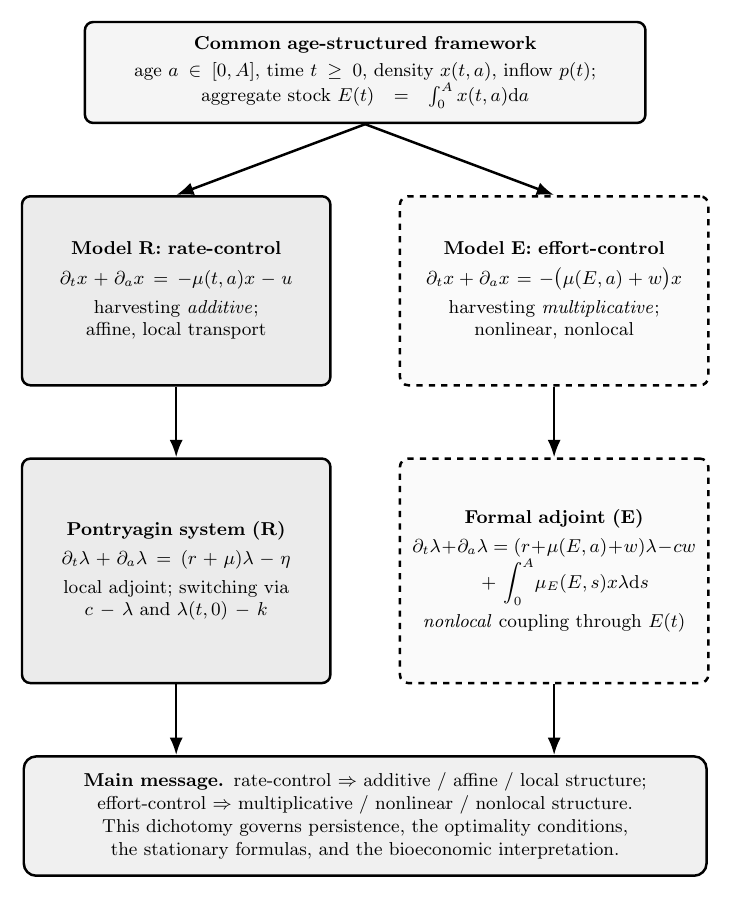}
\caption{Mechanism map of the paper. Starting from a unified age-structured
transport framework, the analysis splits into rate-control (solid) and
effort-control (dashed) harvesting. The former yields a local Pontryagin
system; the latter generates a nonlinear, nonlocal adjoint through aggregate
population dependence.}
\label{fig:mechanism_overview}
\end{figure}

\section{Literature review}
\label{sec:litreview}

The present paper lies at the intersection of five strands, which we review
in turn, organised around the central comparison rather than as a list.

\paragraph{(a) Age-structured population dynamics.}
The continuous-time theory is rooted in the McKendrick--von Foerster
equation \citep{mkendrick_applications_1925,von_foerster_remarks_1959,yu2026pattern} and
has been systematically developed through nonlinear, semigroup, and
abstract-evolution approaches
\citep{webb_semigroup_1984,webb_theory_1985,gao2022rolling,iannelli_mathematical_1995,%
iannelli_basic_2017,metz_dynamics_1986,cushing_introduction_1998,%
engel_one-parameter_2000,banasiak_perturbations_2006,magal_theory_2018},
including recent models with nonlocal feedback and physiological structure
\citep{wang2025analysis,wang2025analysis1,liang2025global}. The associated
steady-state, threshold, and stability theory --- from early nonlinear and
implicit-function frameworks
\citep{gurtin_non-linear_1974,gurtin_simple_1979,cushing_existence_1984,%
pruss_equilibrium_1981,pruss_stability_1983,busenberg_endemic_1988,liu2026ftu,%
busenberg_global_1991,diekmann_steady-state_2003} to general reproduction-number
and pattern formulations
\citep{thieme_mathematics_2018,inaba_age-structured_2006,inaba_stable_2017,%
walker_positive_2009,wang2026algebraic,yu2026beyond,%
wang2026damage,wang2026breakdown} --- underlies the autonomous
reductions of Section~\ref{sec:autonomous}. This body of work provides the
transport framework underlying both of our models.

\paragraph{(b) Optimal harvesting in structured populations.}
Optimal control for age-structured systems has been developed in both
finite- and infinite-horizon settings
\citep{brokate_pontryagins_1985,barbu_optimal_1999,anita_analysis_2000,yu2026rigorous,
anita_optimal_1998,anita_optimal_2009,zhao_optimal_2005,fister_optimal_2004},
with particular attention to the structural properties of optimal policies
\citep{hritonenko_structure_2007,liu2026computational,veliov_optimal_2008} and recent steps toward
stochastic extensions \citep{yu2026microscopic}.
The rate-control optimality system we derive is classical in form; our use
of it is comparative.

\paragraph{(c) Infinite-horizon maximum principles.}
Infinite-horizon problems introduce the well-known difficulty of
transversality conditions at infinity, which has been treated rigorously in
\citep{arrow_optimal_1969,carlson_infinite_1991,aseev_pontryagin_2004,%
aseev_infinite-horizon_2012,yu2026chemotactic,aseev_maximum_2015}. We invoke this literature
when stating, and discussing sufficient conditions for, our transversality
relation.

\paragraph{(d) Rate-control versus effort-control in bioeconomics.}
The distinction between direct harvesting rates and effort-based mortality
is classical in resource economics
\citep{clark_economics_1973,clark_economics_1975,clark_mathematical_2010,yu2026fullcovariance,%
getz_population_1989,reed_optimal_1979,tahvonen_economics_2009}. Mathematically, rate-control preserves an affine
transport structure, whereas effort-control introduces a bilinear coupling,
often complicated by density-dependent nonlocal terms
\citep{boucekkine_optimal_2013,wang2025multi}. 
Recent study established the optimal control results for the effort control model \cite{yu2026size}.
This is precisely the dichotomy our paper
makes explicit at the level of the adjoint system.

\paragraph{(e) Gap addressed by this paper.}
Although the two mechanisms are individually well studied, a direct,
side-by-side derivation that isolates their effect on the
\emph{infinite-horizon optimality system} --- and that locates the emergence
of a nonlocal adjoint term induced by aggregate dependence --- does not
appear to be available in unified form. The present paper fills this gap
within the McKendrick--von Foerster framework.

\section{Model and assumptions}
\label{sec:model}

We consider age-structured population models on the age interval $[0,A]$,
where $A\in(0,\infty)$ is the maximal age and $t\ge0$ is time. The state
variable $x(t,a)\ge0$ is the population density of individuals of age $a$ at
time $t$. The boundary input $p(t)\ge0$ describes the inflow of newborn or
newly introduced individuals at age $a=0$. We compare two harvesting
mechanisms: a rate-control model, in which harvesting acts as a direct
removal term, and an effort-control model, in which harvesting enters as an
additional mortality intensity and may depend on the aggregate population
size. The rate-control model is the main setting for the infinite-horizon
maximum principle; the effort-control model is used to identify the
structural changes induced by multiplicative control and nonlocal population
dependence.

\subsection{Real-world interpretation of the two models}
\label{subsec:interpretation}

Before stating the equations we make the modelling choices concrete, since
the boundary condition and the mortality law are exactly the points at which
the two models depart from the classical renewal model.

\paragraph{The boundary input $p(t)$ as a managed recruitment/stocking
control.}
In the classical Lotka--McKendrick (Kermack--McKendrick) renewal model the
boundary condition is the nonlocal birth law
\begin{equation}\label{eq:renewal_bc}
x(t,0)=\int_0^A \beta(a)\,x(t,a)\dd a,
\end{equation}
where $\beta(a)$ is the age-specific fertility. We deliberately replace
\eqref{eq:renewal_bc} by an \emph{exogenous, controllable} inflow
$x(t,0)=p(t)$. This is not dictated by solvability: the renewal law
\eqref{eq:renewal_bc} is itself well posed. Rather, $p(t)$ models situations
in which recruitment is a \emph{management decision} rather than an internal
demographic outcome --- stocking of a fishery from a hatchery, release of
hatchery-reared or farmed juveniles into a managed population,
re-introduction programmes, or reseeding of a cultivated stand. In such
systems the manager controls how many young individuals enter the population
at $a=0$, and the boundary input is therefore naturally a decision variable
subject to a cost $k(t)$. Treating recruitment as a control (rather than a
fertility integral) is precisely what makes the boundary inflow part of the
optimisation. We note that the framework extends to the renewal boundary law
\eqref{eq:renewal_bc} with $p(t)$ added as a controllable stocking term; we
keep the pure-control form to isolate the effect of the harvesting mechanism.

\paragraph{Density-dependent mortality $\mu(E(t),a)$ in Model E.}
In Model E the mortality coefficient $\mu(E(t),a)$ depends on the aggregate
stock $E(t)=\int_0^A x(t,a)\dd a$ as well as on age. This captures
\emph{crowding and intraspecific competition}: as total abundance grows,
competition for food, space, or other limiting resources raises per-capita
mortality. A simple and widely used instance is the affine law
$\mu(E,a)=\mu_0(a)+\alpha E$ with $\alpha>0$, the resource-limited analogue
of logistic self-limitation; we use exactly this form in the numerical study
(Section~\ref{sec:numerics}). The coupling through $E(t)$ is what makes the
state equation nonlinear and nonlocal. We keep the boundary input in the
simple controllable form $p(t)$ for the same reason as above and in order to
attribute the nonlocal structure unambiguously to the mortality law rather
than to the recruitment law. A representative real-world scenario for Model
E is a stocked, crowding-limited fishery in which natural mortality rises
with total biomass and the instantaneous catch is proportional to fishing
effort and to local abundance (the classical catch-per-unit-effort
assumption).

\paragraph{Notation: $E(t)$ denotes aggregate \emph{stock}, not effort.}
Throughout, $E(t)$ always denotes the aggregate population size (total
stock) defined in \eqref{eq:E_aggregate}; the harvesting \emph{effort} in
Model E is the distributed control $w(t,a)$. We emphasise this to avoid the
common clash of notation in bioeconomics.

\subsection{The two model classes}

\paragraph{Model R (rate-control).}
The age-structured dynamics with direct harvesting rate $u$ is
\begin{align}
\partial_t x(t,a)+\partial_a x(t,a)
&=-\mu(t,a)\,x(t,a)-u(t,a),
&& t>0,\; a\in(0,A), \label{eq:R_state}\\[2pt]
x(t,0)&=p(t), && t\ge 0, \label{eq:R_bc}\\[2pt]
x(0,a)&=x_0(a), && a\in[0,A]. \label{eq:R_ic}
\end{align}
Here $\mu(t,a)\ge0$ is the natural mortality rate, $u(t,a)\ge0$ is the
harvesting rate applied to individuals of age $a$, and $p(t)\ge0$ is the
boundary recruitment/stocking control.

\paragraph{Model E (effort-control).}
The age-structured dynamics with multiplicative harvesting effort $w$ is
\begin{align}
\partial_t x(t,a)+\partial_a x(t,a)
&=-\bigl(\mu(E(t),a)+w(t,a)\bigr)x(t,a), && t>0,\; a\in(0,A),
\label{eq:E_state}\\
E(t)&=\int_0^A x(t,a)\dd a, && t\ge 0, \label{eq:E_aggregate}\\
x(t,0)&=p(t), && t\ge 0, \label{eq:E_bc}\\[2pt]
x(0,a)&=x_0(a), && a\in[0,A]. \label{eq:E_ic}
\end{align}
Here $w(t,a)\ge0$ is the harvesting effort and $\mu(E(t),a)$ is the
density-dependent mortality of Section~\ref{subsec:interpretation}.
Figure~\ref{fig:characteristics} illustrates the evolution along
characteristics in both models.

\begin{figure}[htbp]
    \centering
    \includegraphics[width=\textwidth]{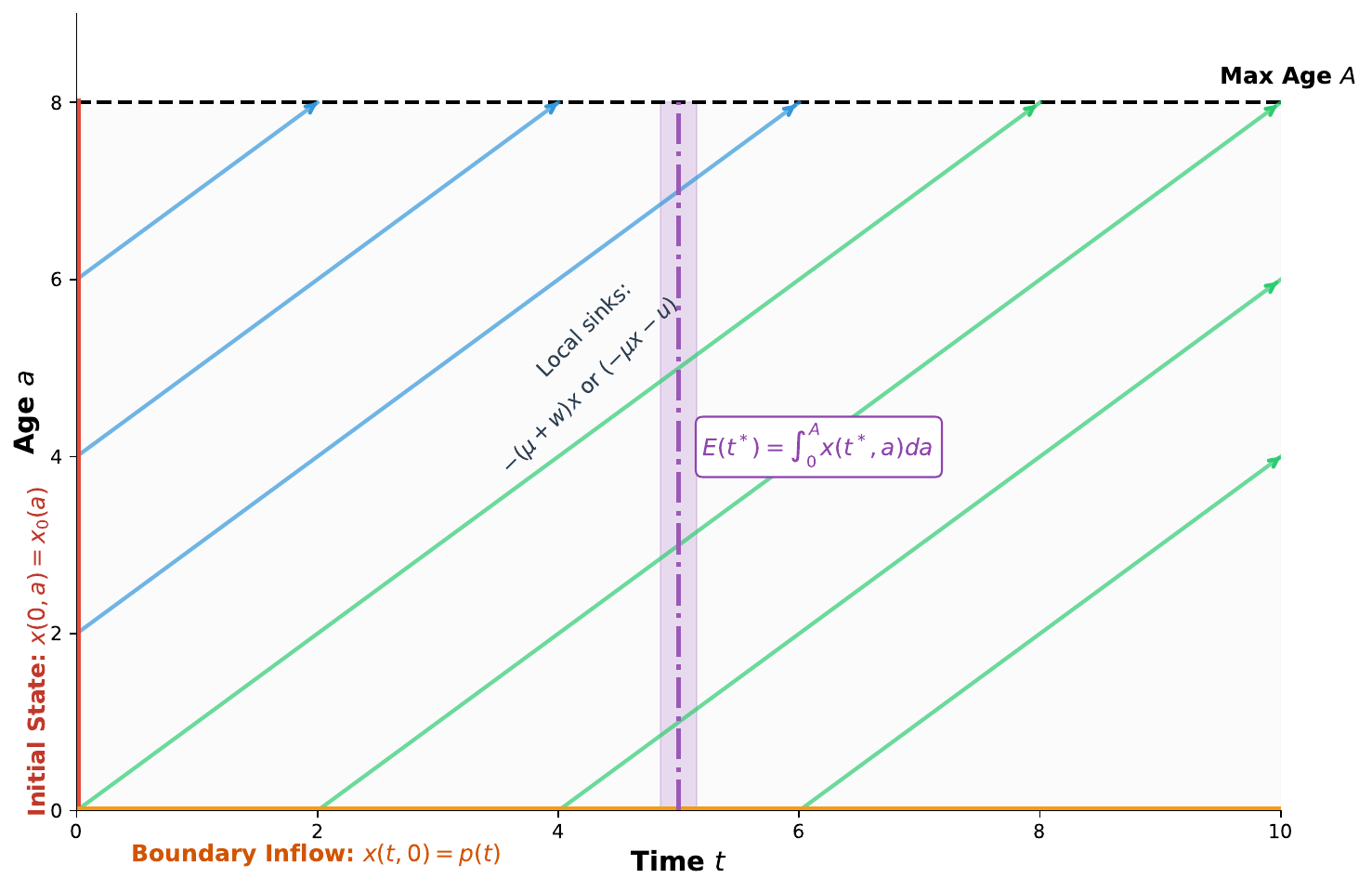}
    \caption{Evolution of the age-structured population along characteristic
    curves in the $(t,a)$ plane. The state $x(t,a)$ propagates along the
    characteristics $\mathrm{d}a/\mathrm{d}t=1$. The boundary inflow control
    $p(t)$ is applied at $a=0$ and the initial profile $x_0(a)$ is given at
    $t=0$. The vertical shaded region illustrates the nonlocal aggregate
    stock $E(t^\ast)$, obtained by integrating the density over all ages at a
    fixed time $t^\ast$.}
    \label{fig:characteristics}
\end{figure}

\subsection{Controls, constraints, and admissible sets}

The control variables are the distributed harvesting control and the
boundary inflow. The constants
\begin{equation}\label{eq:param_defs}
u_{\max},\,w_{\max},\,p_{\max}\in(0,\infty)
\end{equation}
denote, respectively, the maximal admissible harvesting rate (Model R),
maximal admissible harvesting effort (Model E), and maximal admissible
boundary inflow; they are fixed throughout. The remaining data are the
discount rate $r>0$, the unit harvesting value $c(t,a)\ge0$, and the boundary
inflow cost $k(t)\ge0$; all are defined where they first appear in
Section~\ref{sec:optcontrol}.

For Model~R the box constraints are
\begin{equation}
0\le u(t,a)\le u_{\max}, \qquad 0\le p(t)\le p_{\max},
\qquad \text{for a.e. } (t,a), \label{eq:R_controls}
\end{equation}
and for Model~E
\begin{equation}
0\le w(t,a)\le w_{\max}, \qquad 0\le p(t)\le p_{\max},
\qquad \text{for a.e. } (t,a). \label{eq:E_controls}
\end{equation}
In both models the state is required to remain nonnegative,
\begin{equation}
x(t,a)\ge 0 \qquad \text{for a.e. } (t,a)\in[0,\infty)\times[0,A].
\label{eq:state_nonneg}
\end{equation}

We define the \emph{box-admissible} sets
\begin{align}
\mathcal{U}_R
&:=\bigl\{(u,p): u\in L^\infty,\;p\in L^\infty,\;
\eqref{eq:R_controls}\text{ holds}\bigr\}, \label{eq:UR}\\
\mathcal{U}_E
&:=\bigl\{(w,p): w\in L^\infty,\;p\in L^\infty,\;
\eqref{eq:E_controls}\text{ holds}\bigr\}. \label{eq:UE}
\end{align}
Because the additive removal in Model~R can in principle drive $x$ below zero,
the box constraints alone do not guarantee feasibility. We therefore define
the \emph{state-admissible} sets that incorporate \eqref{eq:state_nonneg}
explicitly:
\begin{align}
\mathcal{U}_{R,\mathrm{ad}}
&:=\bigl\{(u,p)\in\mathcal{U}_R:\ \text{the mild solution } x=S(u,p)
\text{ of \eqref{eq:R_state}--\eqref{eq:R_ic} satisfies }
\eqref{eq:state_nonneg}\bigr\}, \label{eq:URad}\\
\mathcal{U}_{E,\mathrm{ad}}
&:=\bigl\{(w,p)\in\mathcal{U}_E:\ \text{the mild solution } x
\text{ of \eqref{eq:E_state}--\eqref{eq:E_ic} satisfies }
\eqref{eq:state_nonneg}\bigr\}. \label{eq:UEad}
\end{align}
The optimisation in Section~\ref{sec:optcontrol} is carried out over
$\mathcal{U}_{R,\mathrm{ad}}$.

\begin{remark}[Nonemptiness and stability of $\mathcal{U}_{R,\mathrm{ad}}$]
\label{rem:URad}
The set $\mathcal{U}_{R,\mathrm{ad}}$ is nonempty: the control $u\equiv0$
(no harvesting) together with any $p\in[0,p_{\max}]$ yields, by the
representation \eqref{eq:explicit_rate_profile} below with $u\equiv0$, a
nonnegative state, so $(0,p)\in\mathcal{U}_{R,\mathrm{ad}}$ whenever
$x_0\ge0$. Moreover $\mathcal{U}_{R,\mathrm{ad}}$ is convex and weak-$\ast$
closed in $L^\infty\times L^\infty$: the solution map $S$ is affine and
weak-$\ast$-to-$L^1_{\mathrm{loc}}$ sequentially continuous (the transport
semigroup is bounded and the removal term is linear), and the constraint
\eqref{eq:state_nonneg} is preserved under $L^1_{\mathrm{loc}}$ limits;
hence the feasible set is the intersection of the weak-$\ast$ compact box
$\mathcal{U}_R$ with a weak-$\ast$ closed convex set. This convexity and
closedness are exactly what the existence proof
(Theorem~\ref{thm:existence_PR}) uses. Stability under admissible
perturbations is local: if $(u,p)\in\mathcal{U}_{R,\mathrm{ad}}$ activates
the constraint only on a set where the removal can be reduced, then small
admissible variations that do not increase $u$ on that set remain feasible;
this is the content of the one-sided switching analysis in
Theorem~\ref{thm:necessary_PR}.
\end{remark}

\subsection{Standing assumptions}

\begin{enumerate}[leftmargin=3em]
    \item[(A1)] $A\in(0,\infty)$.
    \item[(A2)] $x_0\in L^1(0,A)$ and $x_0(a)\ge0$ for a.e.\ $a\in(0,A)$.
    \item[(A3)] In Model~R, $\mu:[0,\infty)\times[0,A]\to[0,\infty)$ is
    bounded and measurable.
    \item[(A4)] In Model~E,
    $\mu:[0,\infty)\times[0,A]\to[0,\infty)$, $(E,a)\mapsto\mu(E,a)$, is
    bounded on bounded sets and measurable in $a$. When variational
    arguments are needed for the formal adjoint calculation we additionally
    assume that $\mu(\cdot,a)$ is continuously differentiable in $E$ for
    a.e.\ $a$, with derivative $\mu_E$ bounded on bounded sets.
    \item[(A5)] The controls belong to $\mathcal{U}_R$ or $\mathcal{U}_E$ as
    appropriate, and $p\ge0$.
    \item[(A6)] \emph{(Admissibility)} Only state-admissible control
    pairs are considered, i.e.\ $(u,p)\in\mathcal{U}_{R,\mathrm{ad}}$ for
    Model~R and $(w,p)\in\mathcal{U}_{E,\mathrm{ad}}$ for Model~E; equivalently,
    the mild solution satisfies the nonnegativity constraint
    \eqref{eq:state_nonneg}.
\end{enumerate}
Under these assumptions each model is understood in the usual mild sense
along characteristics. For Model~R this is a linear transport problem; for
Model~E the aggregate term $E(t)$ and the multiplicative control make the
problem nonlinear. These two features are precisely what distinguish the
two harvesting mechanisms.

For ease of reference we summarise the symbols:
\[
t:\text{ time},\quad a:\text{ age},\quad x(t,a):\text{ age density},\quad
p(t):\text{ boundary inflow (control)},
\]
\[
u(t,a):\text{ harvesting rate (Model R)},\quad
w(t,a):\text{ harvesting effort (Model E)},
\]
\[
E(t)=\int_0^A x(t,a)\dd a:\text{ aggregate stock}.
\]

\section{Infinite-horizon optimal control for the rate-control model}
\label{sec:optcontrol}

We consider the infinite-horizon discounted harvesting problem for the
rate-control model. We first record the optimisation problem and prove
existence of an optimiser; we then derive, under explicitly stated
regularity and constraint-qualification assumptions, a conditional
Pontryagin-type necessary optimality system.

\begin{problem}\label{prob:PR}
Maximise
\begin{equation}\label{eq:PR_obj}
J(u,p)=\int_0^\infty e^{-rt}
\left(\int_0^A c(t,a)\,u(t,a)\dd a-k(t)\,p(t)\right)\dd t
\end{equation}
over all state-admissible control pairs $(u,p)\in\mathcal{U}_{R,\mathrm{ad}}$,
where for each such pair the state $x=S(u,p)$ is the unique mild solution of
\begin{align}
\partial_t x+\partial_a x&=-\mu(t,a)x-u(t,a), && t>0,\ a\in(0,A),
\label{eq:PR_state}\\
x(t,0)&=p(t), && t\ge0, \label{eq:PR_bc}\\
x(0,a)&=x_0(a), && a\in[0,A]. \label{eq:PR_ic}
\end{align}
Here $r>0$ is the discount rate, $c(t,a)\ge0$ is the unit harvesting value,
and $k(t)\ge0$ is the boundary inflow cost.
\end{problem}

\begin{remark}[What is maximised]
The objective \eqref{eq:PR_obj} depends only on the control pair $(u,p)$:
the state $x$ does not appear in $J$. The maximisation is therefore over
controls $(u,p)\in\mathcal{U}_{R,\mathrm{ad}}$, with the state determined
through the solution map $x=S(u,p)$ and entering only through the
feasibility requirement $x\ge0$ encoded in $\mathcal{U}_{R,\mathrm{ad}}$.
Equivalently, one maximises over triples $(u,p,x)$ satisfying the state
equation and the constraints.
\end{remark}

\subsection{Existence of an optimal control}

We address existence before turning to necessary
conditions, so that the optimality system is anchored to an actual
optimiser. The structural feature that makes existence elementary here is
that the rate-control objective \eqref{eq:PR_obj} is \emph{linear} in the
control pair.

\begin{theorem}[Existence for the rate-control problem]
\label{thm:existence_PR}
Assume \textup{(A1)--(A3)}, $r>0$, $c\in L^\infty((0,\infty)\times(0,A))$,
$k\in L^\infty(0,\infty)$, and that the control-to-state map
$S:\mathcal{U}_R\to L^1_{\mathrm{loc}}([0,\infty);L^1(0,A))$ is affine and
sequentially weak-$\ast$-to-$L^1_{\mathrm{loc}}$ continuous. If
$\mathcal{U}_{R,\mathrm{ad}}\neq\varnothing$, then Problem~\ref{prob:PR}
admits at least one optimal control
$(u^\ast,p^\ast)\in\mathcal{U}_{R,\mathrm{ad}}$.
\end{theorem}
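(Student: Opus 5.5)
The approach is the direct method of the calculus of variations in the weak-$\ast$ topology of $L^\infty\times L^\infty$, exploiting that $J$ is linear in $(u,p)$ and that the feasible set is weak-$\ast$ compact.

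\emph{Step 1: $J$ is finite and bounded.} For $(u,p)\in\mathcal{U}_R$,
\[
|J(u,p)|\le \int_0^\infty e^{-rt}\bigl(A\|c\|_\infty u_{\max}+\|k\|_\infty p_{\max}\bigr)\dd t
=\frac{A\|c\|_\infty u_{\max}+\|k\|_\infty p_{\max}}{r}<\infty .
\]
Since $\mathcal{U}_{R,\mathrm{ad}}\neq\varnothing$, the supremum $J^\ast:=\sup_{\mathcal{U}_{R,\mathrm{ad}}}J$ is a finite real number, and we may choose a maximising sequence $(u_n,p_n)\in\mathcal{U}_{R,\mathrm{ad}}$ with $J(u_n,p_n)\to J^\ast$.

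\emph{Step 2: compactness.} The box $\mathcal{U}_R$ is bounded in $L^\infty((0,\infty)\times(0,A))\times L^\infty(0,\infty)$, the dual of the separable space $L^1\times L^1$. By sequential Banach--Alaoglu we extract a subsequence, not relabelled, with $(u_n,p_n)\rightharpoonup^\ast(u^\ast,p^\ast)$. The box constraints \eqref{eq:R_controls} are preserved: test against nonnegative $L^1$ indicator-type functions of arbitrary sets of finite measure. Hence $(u^\ast,p^\ast)\in\mathcal{U}_R$.

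\emph{Step 3: feasibility of the limit.} This is the main point. By the assumed sequential continuity of $S$, $x_n=S(u_n,p_n)\to x^\ast=S(u^\ast,p^\ast)$ in $L^1_{\mathrm{loc}}([0,\infty);L^1(0,A))$. Passing to a further subsequence converging a.e. on each $[0,T]\times(0,A)$, with a diagonal argument over $T\in\mathbb{N}$, we get $x^\ast\ge0$ a.e., since $x_n\ge0$ a.e. Equivalently, the negative part $\int_0^T\!\!\int_0^A (x^\ast)^-$ is the limit of $\int_0^T\!\!\int_0^A (x^\ast)^- - (x_n)^-$, which is bounded by $\|x^\ast-x_n\|_{L^1}\to0$. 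So $(u^\ast,p^\ast)\in\mathcal{U}_{R,\mathrm{ad}}$, as anticipated in Remark~\ref{rem:URad}. The affineness of $S$ is not needed for this step; it only gives convexity of the admissible set, which we do not use.

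\emph{Step 4: passing to the limit in $J$.} Here the infinite horizon enters. Note that
\[
J(u,p)=\iint e^{-rt}c(t,a)\,u(t,a)\dd a\dd t-\int e^{-rt}k(t)\,p(t)\dd t .
\]
Since $r>0$ and $c,k$ are bounded, the weights $e^{-rt}c\in L^1((0,\infty)\times(0,A))$ and $e^{-rt}k\in L^1(0,\infty)$. Thus $J$ is a weak-$\ast$ continuous linear functional, and $J(u_n,p_n)\to J(u^\ast,p^\ast)$, so $J(u^\ast,p^\ast)=J^\ast$ and $(u^\ast,p^\ast)$ is optimal.

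The only delicate issues are two. First, we use the $L^1$ predual on the unbounded domain, which is what lets Banach--Alaoglu give weak-$\ast$ compactness there. Second, we need the integrability of the discounted weights; this is exactly where $r>0$ is used, and it gives continuity of $J$ rather than mere upper semicontinuity.
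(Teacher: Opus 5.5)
Your proposal is correct and follows essentially the same direct-method argument as the paper. Both use Banach--Alaoglu compactness of the box, closedness of $\mathcal{U}_{R,\mathrm{ad}}$, and weak-$\ast$ continuity of $J$ via the $L^1$ weights $e^{-rt}c$ and $e^{-rt}k$. The only difference is that you check sequential closedness of the feasible set directly from the assumed continuity of $S$, instead of citing Remark~\ref{rem:URad}, and you rightly observe that affineness of $S$ is never actually used.
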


\begin{proof}
The box $\mathcal{U}_R$ is bounded, convex, and weak-$\ast$ closed in
$L^\infty((0,\infty)\times(0,A))\times L^\infty(0,\infty)$, hence weak-$\ast$
compact by the Banach--Alaoglu theorem (both factors are duals of separable
$L^1$ spaces, so the weak-$\ast$ topology is metrisable on bounded sets and
sequential arguments suffice). By Remark~\ref{rem:URad} the state-admissible
set $\mathcal{U}_{R,\mathrm{ad}}$ is a weak-$\ast$ closed convex subset of
$\mathcal{U}_R$, hence itself weak-$\ast$ compact, and it is nonempty by
hypothesis.

The objective is weak-$\ast$ continuous. Indeed, since $r>0$,
\[
\|e^{-rt}c\|_{L^1((0,\infty)\times(0,A))}
\le \frac{A}{r}\,\|c\|_{L^\infty}<\infty,
\qquad
\|e^{-rt}k\|_{L^1(0,\infty)}\le\frac{1}{r}\,\|k\|_{L^\infty}<\infty,
\]
so $e^{-rt}c$ and $e^{-rt}k$ are admissible $L^1$ test functions, and
\[
J(u,p)=\langle e^{-rt}c,\,u\rangle-\langle e^{-rt}k,\,p\rangle
\]
is, by definition of the weak-$\ast$ topology, continuous along weak-$\ast$
convergent sequences in $\mathcal{U}_R$.

Let $(u_n,p_n)\subset\mathcal{U}_{R,\mathrm{ad}}$ be a maximising sequence,
$J(u_n,p_n)\to\sup J$. By weak-$\ast$ compactness a subsequence converges
weak-$\ast$ to some $(u^\ast,p^\ast)\in\mathcal{U}_{R,\mathrm{ad}}$. By
weak-$\ast$ continuity $J(u^\ast,p^\ast)=\lim J(u_n,p_n)=\sup J$. Hence the
supremum is attained and $(u^\ast,p^\ast)$ is optimal. The discount factor
$r>0$ is what guarantees that $J$ is finite on $\mathcal{U}_R$ and is
weak-$\ast$ continuous on the infinite horizon; without it the objective
need not be well defined.
\end{proof}

\begin{remark}[On Model~E]
The analogous existence question for the effort-control model is genuinely
harder: there the objective $\int_0^\infty e^{-rt}\int_0^A c\,w\,x\dd a\dd t$
is \emph{bilinear} in $(w,x)$ and $x$ depends nonlinearly and nonlocally on
$w$ through $E$. Lower semicontinuity then requires compactness of the state
map and care with the product $w\,x$. We do not pursue existence for Model~E
here; consistently with Section~\ref{sec:effort}, our treatment of Model~E
is formal.
\end{remark}

\subsection{Standing regularity and constraint-qualification assumptions}

The necessary conditions below are obtained from a first variation of a
current-value Lagrangian. To make precise the sense in which they hold, we
collect the assumptions under which the variational calculation is justified.
We stress that several of these assumptions
\emph{postulate} analytical facts (differentiability of the control-to-state
map, regularity of the adjoint and multiplier, a constraint qualification)
that a fully rigorous infinite-horizon PDE-constrained theory would have to
\emph{establish}. Accordingly, Theorem~\ref{thm:necessary_PR} is stated and
should be read as a \emph{conditional} first-order optimality system: a
necessary system \emph{under} (R1)--(R7), not an unconditional maximum
principle.

\begin{enumerate}[leftmargin=3.4em]
    \item[\textup{(R1)}] \textbf{Data and admissible controls.}
    $c,\mu\in L^\infty((0,\infty)\times(0,A))$, $k\in L^\infty(0,\infty)$,
    $r>0$; $x_0\in L^1(0,A)$, $x_0\ge0$ a.e.; and the controls satisfy the
    box constraints \eqref{eq:R_controls}.
    \item[\textup{(R2)}] \textbf{Well-posedness.} For every
    $(u,p)\in\mathcal{U}_R$ the state equation
    \eqref{eq:PR_state}--\eqref{eq:PR_ic} has a unique mild solution
    $x=S(u,p)\in L^1_{\mathrm{loc}}([0,\infty);L^1(0,A))$.
    \item[\textup{(R3)}] \textbf{Differentiability of $S$.} The map $S$ is
    G\^ateaux differentiable along admissible directions, and for every
    admissible variation $(\delta u,\delta p)$ the derivative
    $\delta x=DS(u,p)[\delta u,\delta p]$ is the unique mild solution of the
    linearised system \eqref{eq:pf_lin_state}--\eqref{eq:pf_lin_bc}.
    \item[\textup{(R4)}] \textbf{Integrability of the objective.} For every
    $(u,p)\in\mathcal{U}_R$,
    $\int_0^\infty e^{-rt}(\int_0^A|c\,u|\dd a+|k\,p|)\dd t<\infty$.
    \item[\textup{(R5)}] \textbf{Regularity of adjoint and multiplier.} The
    adjoint $\lambda$ and the state-constraint multiplier $\eta$ belong to
    classes for which all terms in the Lagrangian and the
    integration-by-parts formulas are meaningful, with
    $e^{-rt}\lambda\in W^{1,1}_{\mathrm{loc}}((0,\infty)\times(0,A))$ and
    $\eta\in L^1_{\mathrm{loc}}((0,\infty)\times(0,A))$, $\eta\ge0$. (See
    Remark~\ref{rem:measure_multiplier} on the measure-valued case.)
    \item[\textup{(R6)}] \textbf{Integration by parts and transversality.}
    The products $e^{-rt}\lambda\,\delta x$, $e^{-rt}\lambda\,\partial_t\delta
    x$, $e^{-rt}\lambda\,\partial_a\delta x$ are integrable on finite
    cylinders and the integration-by-parts identities hold; moreover the
    boundary pairing at infinity vanishes (see
    Lemma~\ref{lem:transversality_suff} for sufficient conditions).
    \item[\textup{(R7)}] \textbf{Constraint qualification.} The pointwise
    state constraint $x\ge0$ is qualified so that there exists a multiplier
    $\eta\ge0$ with complementary slackness $\eta\,x^\ast=0$ a.e.
\end{enumerate}

\begin{remark}[Measure-valued multipliers]
\label{rem:measure_multiplier}
For pointwise pure-state constraints such as $x(t,a)\ge0$, the natural
Lagrange multiplier is in general a nonnegative Radon measure
$\eta\in\mathcal{M}_+([0,\infty)\times[0,A])$ supported on the active set
$\{x^\ast=0\}$, rather than an $L^1_{\mathrm{loc}}$ density; this is the
classical phenomenon for state-constrained control. Assuming
$\eta\in L^1_{\mathrm{loc}}$ in (R5) is therefore a \emph{simplifying
regularity hypothesis}, valid when the active set is ``fat'' enough that the
multiplier admits a density, and convenient because it keeps all terms in
the integration-by-parts formula classical. The entire derivation below goes
through verbatim with $\eta$ a measure provided the products
$e^{-rt}\lambda\dd\eta$ and the pairing $\langle\eta,\delta x\rangle$ are
interpreted in the measure sense and $\delta x$ is taken continuous in
$(t,a)$; the adjoint equation \eqref{eq:adjoint_theorem_PR} then holds in the
sense of measures, with $\eta$ contributing a singular source on the active
set. We record the $L^1$ form for readability and flag the measure-valued
generalisation as the rigorous formulation. This point is especially
relevant for Model~R, where additive removal can activate the constraint on
specific age classes.
\end{remark}

\begin{lemma}[Sufficient conditions for transversality]
\label{lem:transversality_suff}
Suppose there exist constants $C_\lambda,C_x\ge0$ and rates
$\rho,\sigma\ge0$ with $\rho+\sigma<r$ such that, for all large $T$,
\[
\operatorname*{ess\,sup}_{a\in[0,A]}|\lambda(T,a)|\le C_\lambda\,e^{\rho T},
\qquad
\|\delta x(T,\cdot)\|_{L^1(0,A)}\le C_x\,e^{\sigma T}.
\]
Then
$\displaystyle\lim_{T\to\infty}\int_0^A e^{-rT}\lambda(T,a)\delta x(T,a)\dd a=0.$
\end{lemma}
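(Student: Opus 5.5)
The plan is a direct Hölder-type estimate of the boundary pairing at time $T$, followed by the exponential gap $\rho+\sigma<r$. Fix $T$ large enough that both hypothesised bounds hold. For a.e.\ $a\in[0,A]$ we have $|\lambda(T,a)|\le\operatorname*{ess\,sup}_{a}|\lambda(T,a)|$. Pairing this $L^\infty$ bound against the $L^1$ norm of $\delta x(T,\cdot)$ gives
\[
\left|\int_0^A e^{-rT}\lambda(T,a)\,\delta x(T,a)\dd a\right|
\le e^{-rT}\,\|\lambda(T,\cdot)\|_{L^\infty(0,A)}\,\|\delta x(T,\cdot)\|_{L^1(0,A)}
\le C_\lambda C_x\,e^{-(r-\rho-\sigma)T}.
\]
Since $r-\rho-\sigma>0$, the right-hand side tends to $0$ as $T\to\infty$, which yields the claim.

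The only point needing care is that the time slices $\lambda(T,\cdot)$ and $\delta x(T,\cdot)$ are meaningful for the relevant $T$. Under (R5), $e^{-rt}\lambda\in W^{1,1}_{\mathrm{loc}}$, and $\delta x$ is a mild solution of the linearised transport problem by (R3), hence in $C([0,\infty);L^1(0,A))$. So the traces at fixed $T$ exist, for every $T$ or at least for a.e.\ $T$, and the hypotheses of the lemma are read in that sense.

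If the bounds hold only for a.e.\ large $T$, the limit is interpreted along such $T$. Alternatively, the continuity of $T\mapsto\int_0^A e^{-rT}\lambda\,\delta x\dd a$, which comes from the $W^{1,1}_{\mathrm{loc}}$ trace regularity of $e^{-rt}\lambda$, upgrades the a.e.\ estimate to every $T$.

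I expect no genuine obstacle. The step that deserves attention is this interpretation of the pointwise-in-$T$ traces: the rest is the displayed inequality, and the strict inequality $\rho+\sigma<r$ is exactly what makes the exponential factor decay.
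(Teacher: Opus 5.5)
Your argument is correct and is exactly the paper's proof: H\"older's inequality pairs the $L^\infty$ bound on $\lambda(T,\cdot)$ with the $L^1$ bound on $\delta x(T,\cdot)$, giving $C_\lambda C_x e^{-(r-\rho-\sigma)T}\to0$. The trace discussion is unnecessary, since the bounds are assumed for all large $T$, and you should drop the ``continuity upgrade'' remark. $W^{1,1}_{\mathrm{loc}}$ regularity only gives $L^1$-valued continuity of $T\mapsto e^{-rT}\lambda(T,\cdot)$, and that does not make the pairing $\int_0^A e^{-rT}\lambda\,\delta x\dd a$ continuous in $T$.
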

\begin{proof}
By H\"older's inequality,
\[
\left|\int_0^A e^{-rT}\lambda(T,a)\delta x(T,a)\dd a\right|
\le e^{-rT}\,\|\lambda(T,\cdot)\|_{L^\infty}\,\|\delta x(T,\cdot)\|_{L^1}
\le C_\lambda C_x\,e^{-(r-\rho-\sigma)T}\xrightarrow[T\to\infty]{}0,
\]
since $r-\rho-\sigma>0$.
\end{proof}

\begin{remark}[When the hypotheses of Lemma~\ref{lem:transversality_suff} hold]
Under bounded mortality (A3), the linearised state $\delta x$ propagates
along characteristics with at most exponential growth, so
$\|\delta x(T,\cdot)\|_{L^1}\le C_x e^{\sigma T}$ with $\sigma$ controlled by
$\|\mu\|_\infty$ and the size of the admissible variation; and the
discounted adjoint $e^{-rt}\lambda$ is bounded whenever the value profile $c$
and the multiplier $\eta$ are bounded, giving $\rho=0$. The positive discount
rate $r>0$, together with $\sigma<r$, is precisely what closes the estimate.
This is the age-structured counterpart of the standard transversality
conditions for infinite-horizon problems
\citep{aseev_infinite-horizon_2012,aseev_maximum_2015,cai2026optimal}.
\end{remark}

\subsection{The first variation and the optimality system}

We introduce a current-value adjoint $\lambda(t,a)$ for the state equation
and a current-value multiplier $\eta(t,a)\ge0$ for the constraint $x\ge0$,
and form the current-value Lagrangian
\begin{align}
\mathcal L(u,p,x;\lambda,\eta)
&=\int_0^\infty e^{-rt}\left(\int_0^A c\,u\dd a-k\,p\right)\dd t
\notag\\
&\quad-\int_0^\infty\!\!\int_0^A e^{-rt}\lambda
\bigl(\partial_t x+\partial_a x+\mu x+u\bigr)\dd a\dd t
+\int_0^\infty\!\!\int_0^A e^{-rt}\eta\,x\dd a\dd t.
\label{eq:Lagrangian_PR}
\end{align}
Let $(u^\ast,p^\ast,x^\ast)$ be optimal (such a pair exists by
Theorem~\ref{thm:existence_PR}). For admissible $(\delta u,\delta p)$ set
$u^\varepsilon=u^\ast+\varepsilon\delta u$,
$p^\varepsilon=p^\ast+\varepsilon\delta p$ with $|\varepsilon|$ small enough
to preserve feasibility, and let
$\delta x=\frac{d}{d\varepsilon}x^\varepsilon|_{\varepsilon=0}$. Then
\begin{align}
\partial_t \delta x+\partial_a \delta x
&=-\mu\,\delta x-\delta u, \label{eq:pf_lin_state}\\
\delta x(t,0)&=\delta p(t),\qquad \delta x(0,a)=0. \label{eq:pf_lin_bc}
\end{align}
Differentiating \eqref{eq:Lagrangian_PR} at $\varepsilon=0$, collecting the
$\delta u$-terms, and integrating by parts in $t$ and $a$ to transfer the
derivatives onto $\lambda$ (using $\delta x(0,a)=0$ and
$\delta x(t,0)=\delta p$), one obtains
\begin{align}
\delta\mathcal L
&=\int_0^\infty\!\!\int_0^A e^{-rt}\bigl(c-\lambda\bigr)\delta u\dd a\dd t
+\int_0^\infty e^{-rt}\bigl(\lambda(t,0)-k\bigr)\delta p\dd t
\notag\\
&\quad+\int_0^\infty\!\!\int_0^A e^{-rt}
\Bigl(\partial_t\lambda+\partial_a\lambda-(r+\mu)\lambda+\eta\Bigr)
\delta x\dd a\dd t
\notag\\
&\quad-\int_0^\infty e^{-rt}\lambda(t,A)\delta x(t,A)\dd t
-\lim_{T\to\infty}\int_0^A e^{-rT}\lambda(T,a)\delta x(T,a)\dd a.
\label{eq:pf_deltaL_3}
\end{align}
The interior $\delta x$-integral vanishes for arbitrary $\delta x$ iff the
adjoint satisfies $\partial_t\lambda+\partial_a\lambda=(r+\mu)\lambda-\eta$.
The boundary term at $a=A$ vanishes for arbitrary $\delta x(t,A)$ iff
$\lambda(t,A)=0$ (this terminal-age condition is
forced by the requirement that the $a=A$ boundary pairing, which carries the
\emph{outflow} of first variations at the maximal age, contribute nothing to
$\delta\mathcal L$; since $\delta x(t,A)$ is otherwise free, the coefficient
$\lambda(t,A)$ must be zero).
The term at infinity vanishes under the
transversality condition (Lemma~\ref{lem:transversality_suff}). With these
choices,
\begin{equation}\label{eq:pf_deltaL_reduced}
\delta\mathcal L=\int_0^\infty\!\!\int_0^A e^{-rt}(c-\lambda)\delta u\dd a\dd t
+\int_0^\infty e^{-rt}(\lambda(t,0)-k)\delta p\dd t,
\end{equation}
which identifies the switching gradients
$\frac{\partial I}{\partial u}=e^{-rt}(c-\lambda)$ and
$\frac{\partial I}{\partial p}=e^{-rt}(\lambda(t,0)-k)$.

\begin{theorem}[Conditional necessary optimality system]
\label{thm:necessary_PR}
Let $(u^\ast,p^\ast,x^\ast)$ be an optimal solution of
Problem~\ref{prob:PR}, and assume \textup{(R1)--(R7)}. Then there exist a
current-value adjoint $\lambda:[0,\infty)\times[0,A]\to\R$ and a current-value
multiplier $\eta:[0,\infty)\times[0,A]\to[0,\infty)$ (interpreted as a
measure in the sense of Remark~\ref{rem:measure_multiplier} if (R5) is
relaxed) such that:
\begin{enumerate}[leftmargin=2.6em]
    \item[\textup{(i)}] \textbf{Adjoint equation.}
    \begin{equation}\label{eq:adjoint_theorem_PR}
    \partial_t\lambda+\partial_a\lambda=(r+\mu)\lambda-\eta,
    \qquad t>0,\ a\in(0,A).
    \end{equation}
    \item[\textup{(ii)}] \textbf{Terminal-age and transversality conditions.}
    \begin{equation}\label{eq:adjoint_terminal_theorem_PR}
    \lambda(t,A)=0\ (t\ge0),\qquad
    \lim_{T\to\infty}\int_0^A e^{-rT}\lambda(T,a)\delta x(T,a)\dd a=0
    \end{equation}
    for every admissible first variation $\delta x$.
    \item[\textup{(iii)}] \textbf{Switching for the distributed control $u$.}
    With $\frac{\partial I}{\partial u}(t,a):=e^{-rt}(c(t,a)-\lambda(t,a))$,
    \begin{equation}\label{eq:u_switch}
    \frac{\partial I}{\partial u}\le0\ \text{where }u^\ast=0,\quad
    \frac{\partial I}{\partial u}=0\ \text{where }0<u^\ast<u_{\max},\quad
    \frac{\partial I}{\partial u}\ge0\ \text{where }u^\ast=u_{\max}.
    \end{equation}
    \item[\textup{(iv)}] \textbf{Switching for the boundary control $p$.}
    With $\frac{\partial I}{\partial p}(t):=e^{-rt}(\lambda(t,0)-k(t))$,
    \begin{equation}\label{eq:p_switch}
    \frac{\partial I}{\partial p}\le0\ \text{where }p^\ast=0,\quad
    \frac{\partial I}{\partial p}=0\ \text{where }0<p^\ast<p_{\max},\quad
    \frac{\partial I}{\partial p}\ge0\ \text{where }p^\ast=p_{\max}.
    \end{equation}
    \item[\textup{(v)}] \textbf{Complementary slackness.}
    $\eta\ge0$, $x^\ast\ge0$, and $\eta\,x^\ast=0$ a.e.\ (equivalently,
    $\operatorname{supp}\eta\subseteq\{x^\ast=0\}$ in the measure-valued
    formulation).
\end{enumerate}
Moreover, given the multiplier $\eta$ and the data, the adjoint $\lambda$ is
\emph{unique}: it is the unique solution of the backward transport problem
\eqref{eq:adjoint_theorem_PR} with terminal condition
$\lambda(t,A)=0$, integrated along characteristics from $a=A$ downward
(cf.\ the explicit stationary formula \eqref{eq:auto_adjoint_formula}). The
multiplier $\eta$ is in turn determined on the active set by complementary
slackness.
\end{theorem}

\begin{proof}
Existence of an optimal pair is Theorem~\ref{thm:existence_PR}. By (R3),
(R5), (R6) the first variation of the current-value Lagrangian is
\eqref{eq:pf_deltaL_3}, and choosing $\lambda$, the terminal-age condition,
and the transversality condition as above reduces it to
\eqref{eq:pf_deltaL_reduced}. This proves (i), (ii).

For (iii)--(iv) we use the normal-cone characterisation of the box
constraints. At the optimum,
$\delta\mathcal L\le0$ for every feasible direction $(\delta u,\delta p)$,
i.e.\ for every $(\delta u,\delta p)$ in the tangent cone to the box
$[0,u_{\max}]\times[0,p_{\max}]$ at $(u^\ast,p^\ast)$. Equivalently, the
gradient pair $(\frac{\partial I}{\partial u},\frac{\partial I}{\partial p})$
lies in the normal cone $N_{\mathcal U_R}(u^\ast,p^\ast)$. For the pointwise
box constraint $0\le u^\ast(t,a)\le u_{\max}$ this normal cone decomposes
pointwise as
\[
N(t,a)=\begin{cases}
(-\infty,0], & u^\ast(t,a)=0,\\
\{0\}, & 0<u^\ast(t,a)<u_{\max},\\
[0,\infty), & u^\ast(t,a)=u_{\max},
\end{cases}
\]
which is exactly \eqref{eq:u_switch}; the same argument applied to
$p^\ast$ gives \eqref{eq:p_switch}. Concretely: where $0<u^\ast<u_{\max}$ both
signs of $\delta u$ are admissible, forcing
$\frac{\partial I}{\partial u}=0$; where $u^\ast=0$ only $\delta u\ge0$ is
admissible, forcing $\frac{\partial I}{\partial u}\le0$; where
$u^\ast=u_{\max}$ only $\delta u\le0$ is admissible, forcing
$\frac{\partial I}{\partial u}\ge0$.

Part (v) is the constraint qualification (R7): the pointwise state constraint
$x^\ast\ge0$ is enforced by a multiplier $\eta\ge0$ satisfying complementary
slackness $\eta\,x^\ast=0$. Uniqueness of $\lambda$ follows because
\eqref{eq:adjoint_theorem_PR} with $\lambda(t,A)=0$ is a backward transport
equation with a one-sided (terminal-age) condition, integrable uniquely along
characteristics.
\end{proof}

\begin{remark}[Status of the theorem]
Theorem~\ref{thm:necessary_PR} is a \emph{conditional} optimality system:
under (R1)--(R7) it is a set of necessary conditions for the optimiser whose
existence is guaranteed by Theorem~\ref{thm:existence_PR}. We do not claim to
have established the functional-analytic foundations (G\^ateaux
differentiability of $S$, the precise multiplier space, the constraint
qualification) from the data; these are hypothesised in (R3), (R5), (R7). A
fully unconditional infinite-horizon maximum principle for this
state-constrained transport problem --- in particular the rigorous
construction of the transversality relation in the spirit of
\citep{aseev_pontryagin_2004,aseev_maximum_2015,wang2026elliptic} and the measure-valued
multiplier of Remark~\ref{rem:measure_multiplier} --- is left open. The
purpose here is to identify the structural form of the optimality system for
comparison with the effort-control case.
\end{remark}

\subsection{Using the optimality system to compute controls}
\label{subsec:numerical_use_R}

We indicate how Theorem~\ref{thm:necessary_PR} is used to compute (or
approximate) an optimal control. Because the
current-value Hamiltonian is \emph{linear} in $(u,p)$, the optimal control is
determined by the sign of the switching gradients and is generically
\emph{bang-bang}:
\begin{equation}\label{eq:bangbang_R}
u^\ast(t,a)=
\begin{cases}
u_{\max}, & c(t,a)-\lambda(t,a)>0,\\
0, & c(t,a)-\lambda(t,a)<0,
\end{cases}
\qquad
p^\ast(t)=
\begin{cases}
p_{\max}, & \lambda(t,0)-k(t)>0,\\
0, & \lambda(t,0)-k(t)<0,
\end{cases}
\end{equation}
with singular arcs only where a switching function vanishes on a set of
positive measure. This yields a constructive \emph{forward--backward sweep}
scheme: (i) fix a control $(u,p)$ and solve the forward state equation
\eqref{eq:PR_state}--\eqref{eq:PR_ic} along characteristics; (ii) solve the
backward adjoint \eqref{eq:adjoint_theorem_PR} with $\lambda(t,A)=0$,
updating the multiplier $\eta$ on the active set $\{x=0\}$ by complementary
slackness (e.g.\ by a projection/penalisation that keeps $x\ge0$); (iii)
update the control from the switching law \eqref{eq:bangbang_R} (or, on
singular arcs, by gradient projection onto the box using
$\frac{\partial I}{\partial u}$, $\frac{\partial I}{\partial p}$); (iv)
iterate until the switching conditions \eqref{eq:u_switch}--\eqref{eq:p_switch}
hold to a prescribed tolerance. The state constraint is handled numerically
by the truncation/projection described in Section~\ref{sec:numerics}. The
illustrative computation in Section~\ref{subsec:adjoint_numeric} carries out
the adjoint solve and switching step \eqref{eq:bangbang_R} for a prescribed
multiplier, producing what we there call a \emph{bang-bang proxy} control.

\section{Effort-control model and the nonlocal adjoint term (formal)}
\label{sec:effort}

We now turn to the effort-control formulation. In contrast to the
rate-control model, the state equation is nonlinear because the mortality
coefficient depends on the aggregate stock $E(t)=\int_0^A x(t,a)\dd a$, and
the control enters multiplicatively through $w(t,a)x(t,a)$. 
We treat this model at the level of a \emph{formal} variational derivation under sufficient smoothness; we make this status explicit and do not claim a maximum principle.

\begin{quote}
\textbf{Scope statement.} A rigorous infinite-horizon maximum principle for
Model~E --- including existence of an optimiser, the precise functional
setting for the nonlinear nonlocal state map, and the rigorous transversality
analysis --- is beyond the scope of the present paper. The result below
(Proposition~\ref{prop:formal_effort_adjoint}) is a \emph{formal}
identification of the adjoint equation under the smoothness hypotheses stated
in its premises.
\end{quote}

For the effort-control problem consider the infinite-horizon discounted
functional
\begin{equation}\label{eq:PE_obj}
J_E(w,p,x)=\int_0^\infty e^{-rt}
\left(\int_0^A c(t,a)\,w(t,a)x(t,a)\dd a-k(t)\,p(t)\right)\dd t,
\end{equation}
subject to \eqref{eq:E_state}--\eqref{eq:E_ic}. Introduce the current-value
adjoint $\lambda(t,a)$ and the current-value Lagrangian
\begin{align}
\mathcal L_E(w,p,x;\lambda)
&=\int_0^\infty e^{-rt}\left(\int_0^A c\,w\,x\dd a-k\,p\right)\dd t
\notag\\
&\quad-\int_0^\infty\!\!\int_0^A e^{-rt}\lambda
\Bigl(\partial_t x+\partial_a x+\bigl(\mu(E,a)+w\bigr)x\Bigr)\dd a\dd t.
\label{eq:LE}
\end{align}

\subsection{Step-by-step first variation and the sign of the nonlocal term}
\label{subsec:sign_check}

We compute $\delta\mathcal L_E$ in the direction $\delta x$ induced by an
admissible control variation, displaying each contribution so that the sign
of the nonlocal term is unambiguous. We use the sign
convention of \eqref{eq:LE}: the state equation enters with the multiplier
$-e^{-rt}\lambda$ acting on $\partial_t x+\partial_a x+(\mu(E,a)+w)x$.

\smallskip
\noindent\textbf{(1) Running payoff.}
$\delta_{x}\!\bigl(c\,w\,x\bigr)=c\,w\,\delta x$, contributing
\[
\int_0^\infty\!\!\int_0^A e^{-rt}\,c(t,a)w(t,a)\,\delta x\dd a\dd t,
\qquad\text{coefficient of }\delta x:\ +e^{-rt}cw.
\]

\noindent\textbf{(2) Transport terms.} Integration by parts in $t$ and $a$
(with $\delta x(0,a)=0$, $\delta x(t,0)=\delta p$, and the terminal-age and
infinite-horizon boundary terms treated as in
Section~\ref{sec:optcontrol}) transfers the derivatives onto $\lambda$ and
produces the interior coefficient
\[
\partial_t(e^{-rt}\lambda)+\partial_a(e^{-rt}\lambda)
=e^{-rt}\bigl(\partial_t\lambda+\partial_a\lambda-r\lambda\bigr),
\qquad\text{coefficient of }\delta x:\ +e^{-rt}(\partial_t\lambda+\partial_a\lambda-r\lambda).
\]

\noindent\textbf{(3) Local mortality and multiplicative control.}
$\delta_x\!\bigl[(\mu(E,a)+w)x\bigr]$ contains the local part
$(\mu(E,a)+w)\delta x$, which enters \eqref{eq:LE} with the factor
$-e^{-rt}\lambda$:
\[
\text{coefficient of }\delta x:\ -e^{-rt}\bigl(\mu(E,a)+w\bigr)\lambda.
\]

\noindent\textbf{(4) Nonlocal (aggregate) part.}
The dependence of $\mu$ on $E$ gives the extra term
\[
\delta_x\!\bigl[\mu(E,a)x\bigr]\Big|_{\text{nonlocal}}
=\mu_E(E,a)\,x(t,a)\,\delta E(t),
\qquad
\delta E(t)=\int_0^A\delta x(t,s)\dd s.
\]
Inserting this into $-\int\!\!\int e^{-rt}\lambda\,(\cdots)$ and exchanging the
order of integration,
\begin{align}
&-\int_0^\infty\!\!\int_0^A e^{-rt}\lambda(t,a)\,\mu_E(E,a)\,x(t,a)
\left(\int_0^A\delta x(t,s)\dd s\right)\dd a\dd t
\notag\\
&\qquad=-\int_0^\infty\!\!\int_0^A e^{-rt}
\left(\int_0^A\mu_E(E,\sigma)\,x(t,\sigma)\,\lambda(t,\sigma)\dd\sigma\right)
\delta x(t,s)\dd s\dd t,
\label{eq:nonlocal_variation}
\end{align}
so the coefficient of $\delta x(t,s)$ from the nonlocal part is
\[
-e^{-rt}\int_0^A \mu_E(E,\sigma)\,x(t,\sigma)\,\lambda(t,\sigma)\dd\sigma
\qquad(\text{independent of }s).
\]

\smallskip
\noindent\textbf{Collecting (1)--(4)} and requiring the total coefficient of
$\delta x$ to vanish for all admissible $\delta x$:
\[
e^{-rt}\Bigl[
\partial_t\lambda+\partial_a\lambda-r\lambda
+cw-(\mu(E,a)+w)\lambda
-\int_0^A\mu_E(E,\sigma)x(t,\sigma)\lambda(t,\sigma)\dd\sigma
\Bigr]=0,
\]
hence, solving for $\partial_t\lambda+\partial_a\lambda$,
\begin{equation}\label{eq:sign_checked_adjoint}
\partial_t\lambda+\partial_a\lambda
=\bigl(r+\mu(E,a)+w\bigr)\lambda-c\,w
+\int_0^A\mu_E(E,s)\,x(t,s)\,\lambda(t,s)\dd s.
\end{equation}
The nonlocal integral therefore enters with a $\boldsymbol{+}$ sign, the
running-payoff term with a $\boldsymbol{-}$ sign. The positive sign is a
direct consequence of (a) the $-e^{-rt}\lambda$ convention in \eqref{eq:LE}
and (b) the second minus sign produced when the term is moved to the
right-hand side; the two cancel. We have verified this independently by
re-deriving \eqref{eq:sign_checked_adjoint} from the Hamiltonian
$\mathcal H=e^{-rt}(cwx)-e^{-rt}\lambda(\mu(E,a)+w)x$ and the costate rule
$\partial_t\lambda+\partial_a\lambda=r\lambda-\partial_x\mathcal H/e^{-rt}$
with the G\^ateaux derivative of the nonlocal functional
$x\mapsto\int\mu(E,a)x\,\lambda$; both routes give the same positive sign.

\begin{proposition}[Formal adjoint system for the effort-control problem]
\label{prop:formal_effort_adjoint}
Assume the coefficients, controls, and state are sufficiently smooth, that
$\mu(E,a)$ is continuously differentiable in $E$ with $\mu_E$ bounded on
bounded sets, and that all integration-by-parts steps are justified. Then the
current-value adjoint associated with \eqref{eq:PE_obj}--\eqref{eq:E_ic}
formally satisfies \eqref{eq:sign_checked_adjoint}, i.e.
\begin{align}
\partial_t\lambda+\partial_a\lambda
&=\bigl(r+\mu(E(t),a)+w(t,a)\bigr)\lambda(t,a)-c(t,a)w(t,a)
\notag\\
&\quad+\int_0^A\mu_E(E(t),s)\,x(t,s)\,\lambda(t,s)\dd s,
\qquad t>0,\ a\in(0,A),
\label{eq:formal_effort_adjoint}
\end{align}
together with the formal terminal-age and transversality conditions
\begin{equation}\label{eq:formal_effort_boundary}
\lambda(t,A)=0\ (t\ge0),\qquad
\lim_{T\to\infty}\int_0^A e^{-rT}\lambda(T,a)\delta x(T,a)\dd a=0
\end{equation}
for every admissible first variation $\delta x$.
\end{proposition}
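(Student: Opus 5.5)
The argument is a formal first-variation computation for the Lagrangian $\mathcal L_E$ in \eqref{eq:LE}. It runs parallel to the derivation of \eqref{eq:pf_deltaL_3} for Model~R and reuses the term-by-term bookkeeping of Section~\ref{subsec:sign_check}.

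First, fix a smooth optimal (or merely stationary) triple $(w^\ast,p^\ast,x^\ast)$ and an admissible variation $(\delta w,\delta p)$. Differentiate the state system \eqref{eq:E_state}--\eqref{eq:E_ic} to obtain the linearised problem
\[
\partial_t\delta x+\partial_a\delta x=-(\mu(E,a)+w)\delta x-\mu_E(E,a)\,x\,\delta E-\delta w\,x,
\qquad \delta E(t)=\int_0^A\delta x(t,s)\dd s,
\]
with $\delta x(t,0)=\delta p(t)$ and $\delta x(0,a)=0$. This is a linear transport equation with a bounded nonlocal rank-one perturbation. Given the smoothness hypotheses, $\delta x$ exists and its derivatives may be paired with $e^{-rt}\lambda$ on finite cylinders $[0,T]\times[0,A]$.

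Second, compute $\delta\mathcal L_E$ on $[0,T]\times[0,A]$ and then let $T\to\infty$. Integration by parts in $t$ and $a$ moves the transport operator onto $e^{-rt}\lambda$. This produces the interior coefficient of step~(2), the boundary term $-\int_0^T e^{-rt}\lambda(t,A)\delta x(t,A)\dd t$, the inflow term $\int_0^T e^{-rt}\lambda(t,0)\delta p\dd t$, and the terminal pairing $-\int_0^A e^{-rT}\lambda(T,a)\delta x(T,a)\dd a$; the $t=0$ term vanishes because $\delta x(0,\cdot)=0$. The local contributions (1) and (3) are read off directly. The nonlocal contribution (4) is handled by Fubini on the bounded square $[0,A]^2$, exactly as in \eqref{eq:nonlocal_variation}. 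That step is legitimate because $\mu_E$ is bounded on bounded sets, $E$ stays bounded on finite time intervals, and $x,\lambda$ are locally integrable, so the integrand is in $L^1$.

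Third, choose $\lambda$ so that every $\delta x$-dependent term drops out. The interior coefficient vanishes identically exactly when \eqref{eq:formal_effort_adjoint} holds. The boundary pairing at $a=A$ vanishes when $\lambda(t,A)=0$: as in Model~R, the outflow $\delta x(t,A)$ is not constrained by the data, so its coefficient must be zero. The pairing at infinity vanishes under the transversality relation in \eqref{eq:formal_effort_boundary}, and Lemma~\ref{lem:transversality_suff} can be quoted as a sufficient condition since its proof uses only H\"older's inequality and not the model. After these choices, $\delta\mathcal L_E$ reduces to terms in $\delta w$ and $\delta p$ alone, namely $e^{-rt}(c-\lambda)x\,\delta w$ and $e^{-rt}(\lambda(t,0)-k)\delta p$, which confirms consistency. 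As an independent check on the sign of the nonlocal term, redo the computation with the Hamiltonian route described after \eqref{eq:sign_checked_adjoint}.

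The main obstacle is not the algebra but ensuring that the backward adjoint problem \eqref{eq:formal_effort_adjoint} with $\lambda(t,A)=0$ is meaningful, since the nonlocal term couples all ages at each time. I would handle this along characteristics: for a given $t$-dependent function $m(t)=\int_0^A\mu_E x\lambda\dd s$, the equation is a local backward transport problem, and $m$ is then recovered by a Volterra-type fixed point on finite horizons. The present statement only requires the formal identity under the assumption that all steps are justified, so I would record this fixed-point argument as a remark rather than prove it.
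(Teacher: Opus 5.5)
Your proposal is correct and follows essentially the same route as the paper. You take a formal first variation of $\mathcal L_E$ with the same four-part bookkeeping (running payoff, transport terms moved onto $e^{-rt}\lambda$ by integration by parts, local mortality/effort, and the Fubini exchange for the nonlocal $\mu_E\,x\,\delta E$ term), handle $\lambda(t,A)=0$ and the transversality limit as for Model~R, and use the Hamiltonian route as a sign check. Your additions (truncating to $[0,T]$ before letting $T\to\infty$, the $\delta w$/$\delta p$ consistency check, and the remark on solving the nonlocal adjoint) are harmless. One correction: the integrability needed for Fubini comes from the smoothness hypothesis, not from local integrability of $x$ and $\lambda$ separately, since a product of $L^1_{\mathrm{loc}}$ functions need not be integrable.
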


Figure~\ref{fig:architecture} visualizes the infinite-horizon necessary optimality conditions.

\begin{figure}[htbp]
 \centering
 \includegraphics[width=1.3\textwidth]{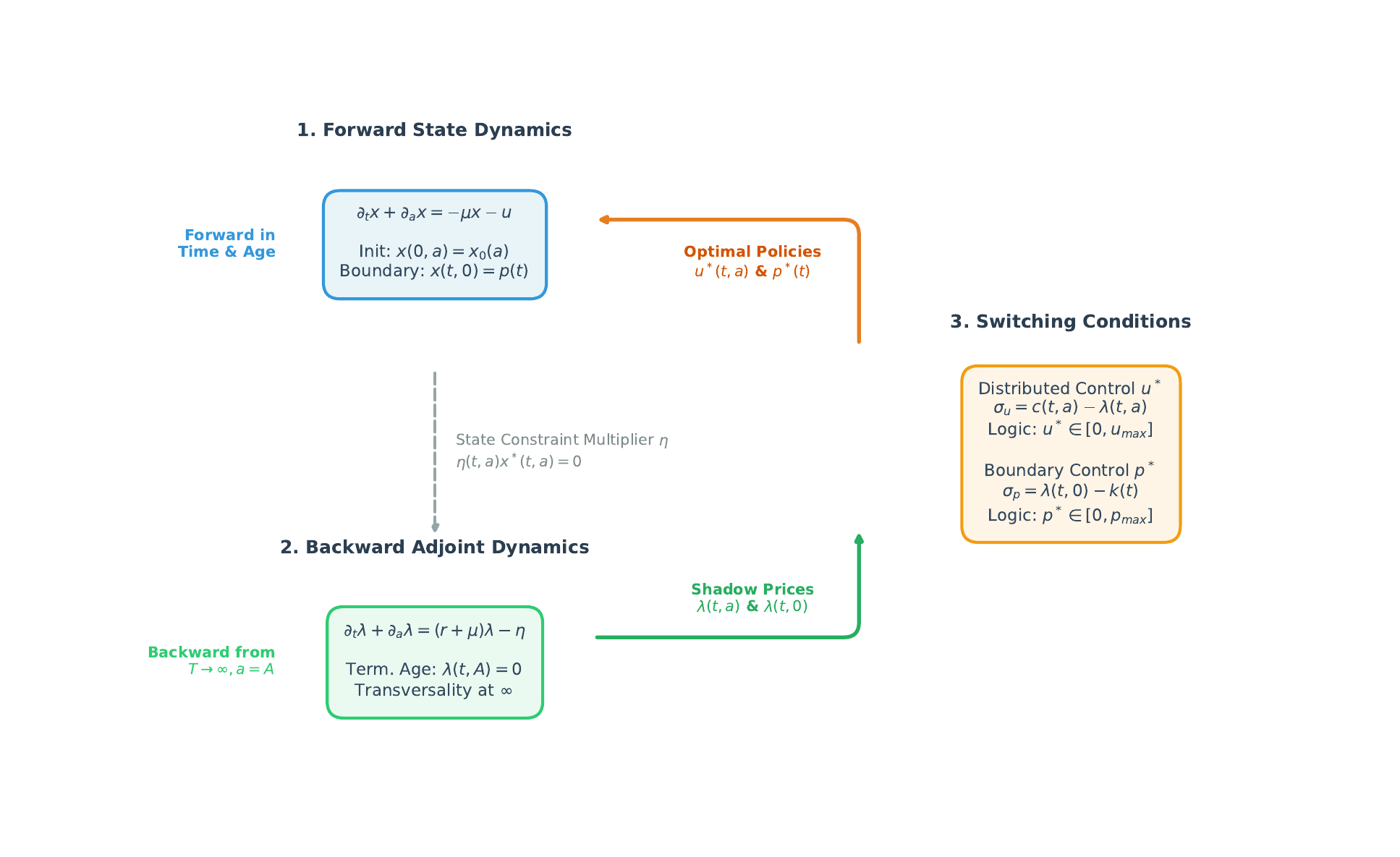}
 \caption{Information flow and system architecture of the infinite-horizon
 necessary optimality conditions. The diagram shows the coupling between the
 forward-in-time demographic state dynamics and the backward-in-time adjoint
 dynamics (with terminal data at $T\to\infty$ and $a=A$). The shadow prices
 $\lambda$ produced by the adjoint system determine the optimal distributed
 and boundary controls through the switching conditions
 \eqref{eq:u_switch}--\eqref{eq:p_switch}.}
 \label{fig:architecture}
\end{figure}

\begin{remark}[Structural contrast]
The contrast with the rate-control adjoint \eqref{eq:adjoint_theorem_PR} is
now transparent. In the rate-control case the adjoint equation is local in
age and its only source is the state-constraint multiplier. In the
effort-control case, aggregate dependence of $\mu$ produces the additional
\emph{nonlocal} term
\[
\int_0^A\mu_E(E(t),s)\,x(t,s)\,\lambda(t,s)\dd s,
\]
which couples all ages through the total stock $E(t)$ and is independent of
the age $a$ at which the adjoint equation is evaluated. This is the principal
analytical feature distinguishing the two mechanisms.
\end{remark}

\begin{remark}[Using \eqref{eq:formal_effort_adjoint} to compute controls]
The effort-control Hamiltonian is again linear in the controls, so the formal
optimality conditions yield a switching law for $w$ through the gradient
$e^{-rt}(c\,x-\lambda x)=e^{-rt}x(c-\lambda)$ (note the common factor $x\ge0$)
and for $p$ through $e^{-rt}(\lambda(t,0)-k)$. A forward--backward sweep as in
Section~\ref{subsec:numerical_use_R} applies, with two differences: (a) the
forward solve is nonlinear because $\mu$ depends on $E(t)$, requiring an inner
fixed-point/Newton iteration for $E$; and (b) each backward adjoint step must
evaluate the nonlocal integral
$\int_0^A\mu_E\,x\,\lambda\dd s$ at every age, raising the per-iteration cost
from $\mathcal O(N)$ to $\mathcal O(N)$ with a global coupling (a single quadrature reused
across all ages, so still $\mathcal O(N)$ per time level but with a non-local data
dependency). This is the practical price of the nonlocal term. We do not
implement the full effort-control optimiser here, consistently with the
formal status of Proposition~\ref{prop:formal_effort_adjoint}.
\end{remark}

\section{Autonomous reductions and explicit stationary formulas}
\label{sec:autonomous}

This section collects autonomous reductions and explicit formulas that
complement the general theory. Their role is not to introduce new existence
theory but to provide computable representations of the state and adjoint in
the stationary setting, and --- as detailed in
Section~\ref{subsec:stationary_role} --- to make the rate-control vs.\
effort-control contrast quantitative and to supply the closed-form profiles
used in the numerical study.

Throughout this section the coefficients are time-independent. For the
rate-control model we write
\begin{equation}\label{eq:auto_rate_data}
\mu(t,a)=\mu(a),\qquad c(t,a)=c(a),\qquad k(t)=k,
\end{equation}
and seek steady states
\begin{equation}\label{eq:auto_rate_ansatz}
x(t,a)=x(a),\qquad u(t,a)=u(a),\qquad p(t)=p.
\end{equation}
For the effort-control model,
\begin{equation}\label{eq:auto_effort_data}
\mu(E(t),a)=\mu(E,a),\qquad c(t,a)=c(a),\qquad k(t)=k,
\end{equation}
with steady states
\begin{equation}\label{eq:auto_effort_ansatz}
x(t,a)=x(a),\qquad w(t,a)=w(a),\qquad p(t)=p,\qquad E(t)=E.
\end{equation}
Under these assumptions the transport equations reduce to first-order ODEs in
age.

\begin{proposition}[Steady-state ODE, rate-control]
\label{prop:auto_rate_ode}
Under \eqref{eq:auto_rate_data}--\eqref{eq:auto_rate_ansatz}, the
rate-control state equation reduces to
\begin{equation}\label{eq:auto_rate_ode}
x'(a)=-\mu(a)x(a)-u(a),\qquad x(0)=p.
\end{equation}
\end{proposition}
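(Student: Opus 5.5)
The plan is to substitute the time-independent ansatz \eqref{eq:auto_rate_ansatz} into the rate-control system \eqref{eq:PR_state}--\eqref{eq:PR_bc}, with the coefficients frozen as in \eqref{eq:auto_rate_data}. Since $x(t,a)=x(a)$ does not depend on $t$, the term $\partial_t x$ vanishes identically. The transport operator then reduces to $\partial_a x=x'(a)$. The right-hand side becomes $-\mu(a)x(a)-u(a)$, which gives the ODE in \eqref{eq:auto_rate_ode}. The boundary condition \eqref{eq:PR_bc}, $x(t,0)=p(t)=p$, becomes $x(0)=p$. The initial condition \eqref{eq:PR_ic} is then consistent only if $x_0=x$; we read it as a compatibility condition on the initial datum, not as a constraint on the ODE.

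The only point needing care is the solution concept, because the state is defined as a mild solution along characteristics. The data are only $L^\infty$, so the reduction should also be made at that level.

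First, I would write the mild solution of \eqref{eq:PR_state}--\eqref{eq:PR_bc} for $t>a$ along the characteristic $(t-a+s,s)$, $s\in[0,a]$. With autonomous data this gives
\[
x(t,a)=p\,e^{-\int_0^a\mu}-\int_0^a e^{-\int_s^a\mu}\,u(s)\dd s .
\]
This expression is independent of $t$. It is exactly the variation-of-constants formula for \eqref{eq:auto_rate_ode}. That formula defines the unique absolutely continuous (Carathéodory) solution, with $x'=-\mu x-u$ holding a.e.

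Conversely, an absolutely continuous solution of \eqref{eq:auto_rate_ode} is constant along $t$. Along each characteristic it satisfies the characteristic ODE. Hence it is the mild solution with initial datum $x_0=x$, and the equivalence holds in both directions.

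I expect no real obstacle. The mildest subtlety is justifying the characteristic formula with $L^\infty$ coefficients, and that is classical (cf.\ \eqref{eq:explicit_rate_profile}).
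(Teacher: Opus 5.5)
Your proposal is correct, and its core is the paper's proof: set $\partial_t x=0$, substitute the frozen data and stationary controls into \eqref{eq:PR_state}, and read \eqref{eq:PR_bc} as $x(0)=p$. Your characteristic-level argument goes beyond the paper, which treats the reduction purely formally and silently drops the initial condition; it shows that the stationary mild solution coincides with the Carath\'eodory solution \eqref{eq:explicit_rate_profile}, so the reduction is justified for $L^\infty$ data, and your remark that \eqref{eq:PR_ic} becomes the compatibility condition $x_0=x$ is correct.
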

\begin{proof}
Since $x(t,a)=x(a)$, $\partial_t x=0$; substituting
\eqref{eq:auto_rate_data}--\eqref{eq:auto_rate_ansatz} gives
\eqref{eq:auto_rate_ode}, and $x(t,0)=p(t)$ becomes $x(0)=p$.
\end{proof}

\begin{proposition}[Steady-state ODE, effort-control]
\label{prop:auto_effort_ode}
Under \eqref{eq:auto_effort_data}--\eqref{eq:auto_effort_ansatz}, the
effort-control state equation reduces to
\begin{equation}\label{eq:auto_effort_ode}
x'(a)=-\bigl(\mu(E,a)+w(a)\bigr)x(a),\qquad x(0)=p.
\end{equation}
\end{proposition}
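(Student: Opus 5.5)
The plan mirrors the proof of Proposition~\ref{prop:auto_rate_ode}: substitute the autonomous data \eqref{eq:auto_effort_data} and the stationary ansatz \eqref{eq:auto_effort_ansatz} into the effort-control state equation \eqref{eq:E_state}--\eqref{eq:E_bc}, and read off the resulting ODE in age.

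First, since $x(t,a)=x(a)$ is independent of $t$, we have $\partial_t x=0$, and the transport operator reduces to $\partial_a x=x'(a)$. Second, under the ansatz the aggregate stock is
\[
E(t)=\int_0^A x(t,a)\dd a=\int_0^A x(a)\dd a,
\]
which is the constant $E$ appearing in \eqref{eq:auto_effort_ansatz}. Hence $\mu(E(t),a)=\mu(E,a)$ is a function of age alone, and with $w(t,a)=w(a)$ the right-hand side of \eqref{eq:E_state} becomes $-(\mu(E,a)+w(a))x(a)$, which gives the ODE in \eqref{eq:auto_effort_ode}. Third, the boundary condition \eqref{eq:E_bc} with $p(t)=p$ becomes $x(0)=p$. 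The initial condition \eqref{eq:E_ic} is consistent only with $x_0=x$ and plays no role in the reduced problem.

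The only point requiring care, and the natural place for a brief remark, is the self-consistency of $E$. The reduction is valid exactly when the constant $E$ in $\mu(E,a)$ coincides with $\int_0^A x(a)\dd a$ for the solution $x$ of \eqref{eq:auto_effort_ode}. This is a fixed-point condition linking the ODE to its own integral, which is the source of the nonlinearity and nonlocality. The proposition as stated takes this identity as part of the ansatz, so nothing further needs to be proved. I would note, however, that solving the scalar fixed-point equation $E=\int_0^A p\,e^{-\int_0^a(\mu(E,s)+w(s))\dd s}\dd a$ is deferred to the explicit stationary formulas. In the mild sense, the ODE is meant in the absolutely continuous/Carathéodory sense, since $\mu(E,\cdot)$ and $w$ are only bounded and measurable by (A4) and (A5).
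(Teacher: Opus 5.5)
Your proposal is correct and matches the paper's proof: under the stationary ansatz $\partial_t x=0$, and direct substitution into \eqref{eq:E_state} and \eqref{eq:E_bc} gives \eqref{eq:auto_effort_ode} with $x(0)=p$. Your extra remarks are accurate refinements that the paper leaves implicit: the self-consistency requirement $E=\int_0^A x(a)\dd a$ (which the paper handles only in its numerical fixed-point iteration), and the Carath\'eodory sense of the ODE.
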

\begin{proof}
With $\partial_t x=0$, direct substitution gives \eqref{eq:auto_effort_ode}
and $x(0)=p$.
\end{proof}

\begin{proposition}[Explicit stationary profile, rate-control]
\label{prop:explicit_rate_profile}
The unique solution of \eqref{eq:auto_rate_ode} is
\begin{equation}\label{eq:explicit_rate_profile}
x(a)=p\exp\!\left(-\int_0^a\mu(\xi)\dd\xi\right)
-\int_0^a\exp\!\left(-\int_s^a\mu(\xi)\dd\xi\right)u(s)\dd s,
\qquad 0\le a\le A.
\end{equation}
\end{proposition}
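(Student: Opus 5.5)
The plan is the standard integrating-factor argument for the linear first-order ODE \eqref{eq:auto_rate_ode} from Proposition~\ref{prop:auto_rate_ode}. Since $\mu$ is bounded and measurable (A3), $M(a):=\int_0^a\mu(\xi)\dd\xi$ is Lipschitz on $[0,A]$. We will work with absolutely continuous solutions, i.e.\ $x\in W^{1,1}(0,A)$ satisfying \eqref{eq:auto_rate_ode} a.e.; this is the stationary analogue of the mild solution along characteristics. For the controls we only need $u\in L^1(0,A)$, which holds because $0\le u\le u_{\max}$.

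\textbf{Existence (verification).} Set $y(a):=e^{M(a)}x(a)$ with $x$ defined by \eqref{eq:explicit_rate_profile}. Using $\exp(-\int_s^a\mu)=e^{-M(a)}e^{M(s)}$, the formula becomes
\[
y(a)=p-\int_0^a e^{M(s)}u(s)\dd s .
\]
The right-hand side is absolutely continuous with $y'(a)=-e^{M(a)}u(a)$ a.e.\ by the Lebesgue differentiation theorem. The product rule for absolutely continuous functions then gives $x=e^{-M}y\in W^{1,1}$ and
\[
x'=-\mu e^{-M}y+e^{-M}y'=-\mu x-u \quad\text{a.e.}
\]
Finally, $x(0)=y(0)=p$ because $M(0)=0$.

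\textbf{Uniqueness.} Suppose $x_1,x_2$ are two absolutely continuous solutions and set $z=x_1-x_2$. Then $z'=-\mu z$ a.e. and $z(0)=0$. Consequently $(e^{M}z)'=e^{M}(z'+\mu z)=0$ a.e., so $e^{M}z$ is an absolutely continuous function with zero derivative. It is therefore constant, equal to its value $0$ at $a=0$, and hence $z\equiv0$. Equivalently, one can apply Grönwall's inequality to $|z|$ using $\|\mu\|_\infty<\infty$.

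\textbf{Main obstacle.} The only delicate point is interpretation rather than computation. Because $\mu$ and $u$ are merely measurable and bounded, \eqref{eq:auto_rate_ode} has to be read in the Carathéodory sense. The step that needs care is justifying the product rule and the fundamental theorem of calculus for absolutely continuous functions; the Lipschitz continuity of $M$ and the integrability of $e^{M}u$ are exactly what guarantee this. If $\mu$ and $u$ are continuous, the argument reduces to the classical variation-of-constants formula. The nonnegativity requirement \eqref{eq:state_nonneg} is not part of the statement, so it plays no role here; it will only enter when the formula is used to characterise state-admissible pairs, e.g.\ the case $u\equiv0$ in Remark~\ref{rem:URad}.
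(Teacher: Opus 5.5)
Your proof is correct and uses the same integrating-factor argument as the paper: the paper multiplies $x'+\mu x=-u$ by $\exp(\int_0^a\mu)$, integrates from $0$ to $a$ using $x(0)=p$, and divides back. Your split into a verification step and a separate uniqueness step, together with your care about absolutely continuous (Carath\'eodory) solutions, makes explicit what the paper's one-line derivation leaves implicit, namely that any solution must coincide with the formula.
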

\begin{proof}
Write \eqref{eq:auto_rate_ode} as $x'+\mu x=-u$ and multiply by the
integrating factor $M(a)=\exp(\int_0^a\mu)$ to get
$\frac{d}{da}(Mx)=-Mu$. Integrating from $0$ to $a$ with $x(0)=p$ and
dividing by $M(a)$ gives \eqref{eq:explicit_rate_profile}.
\end{proof}

\begin{proposition}[Explicit stationary profile, effort-control]
\label{prop:explicit_effort_profile}
The unique solution of \eqref{eq:auto_effort_ode} is
\begin{equation}\label{eq:explicit_effort_profile}
x(a)=p\exp\!\left(-\int_0^a\bigl(\mu(E,\xi)+w(\xi)\bigr)\dd\xi\right),
\qquad 0\le a\le A.
\end{equation}
\end{proposition}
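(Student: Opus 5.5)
The plan is to treat \eqref{eq:auto_effort_ode} as a \emph{linear} homogeneous first-order ODE in $a$ and then mirror the integrating-factor argument of Proposition~\ref{prop:explicit_rate_profile}. The point to keep straight is that in the stationary setting \eqref{eq:auto_effort_ansatz} the aggregate $E$ is a fixed number. It is therefore held fixed as a parameter, and the coefficient $q(a):=\mu(E,a)+w(a)$ is a given function of age. Under (A4), $\mu(E,\cdot)$ is bounded and measurable because $\{E\}$ is a bounded set. By \eqref{eq:E_controls}, $0\le w\le w_{\max}$. Hence $q\in L^\infty(0,A)\subset L^1(0,A)$, so $Q(a):=\int_0^a q(\xi)\dd\xi$ is absolutely continuous on $[0,A]$. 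Solutions are understood in the Carath\'eodory sense, i.e.\ absolutely continuous $x$ satisfying the ODE for a.e.\ $a$.

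\textbf{Verification (existence).} Set $x(a)=p\,e^{-Q(a)}$. Since $Q$ is absolutely continuous and $\exp$ is $C^1$, $x$ is absolutely continuous. The chain rule gives $x'(a)=-q(a)x(a)$ for a.e.\ $a$, and $x(0)=p$ because $Q(0)=0$. This is exactly \eqref{eq:explicit_effort_profile}.

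\textbf{Uniqueness.} Let $y$ be any absolutely continuous solution. Multiply by the integrating factor $M(a)=e^{Q(a)}$ and use the product rule for absolutely continuous functions:
\[
\tfrac{d}{da}\bigl(M(a)y(a)\bigr)=M(a)\bigl(y'(a)+q(a)y(a)\bigr)=0 \quad\text{for a.e. } a.
\]
So $My$ is absolutely continuous with zero a.e.\ derivative, hence constant and equal to its value $p$ at $a=0$. Thus $y=p\,e^{-Q}$. Alternatively, one can apply Gr\"onwall to the difference of two solutions.

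\textbf{Main obstacle: what the statement actually claims.} The delicate point is not the ODE itself but the fact that $E$ appears inside the formula. The proposition should be read with $E$ given. Uniqueness then holds for each fixed $E$, not for the self-consistent stationary problem, where $E=\int_0^A x(a)\dd a$ becomes the scalar fixed-point equation
\[
E=p\int_0^A e^{-\int_0^a(\mu(E,\xi)+w(\xi))\dd\xi}\dd a,
\]
which may have several roots. I would state this explicitly in the proof to avoid overclaiming.

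The only further care is at points where $q$ is merely measurable. There the identity holds a.e., and this is handled entirely by working in the absolutely continuous (Carath\'eodory) solution class.
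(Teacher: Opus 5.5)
Your proof is correct, but it takes a slightly different route from the paper's, and the difference matters.

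The paper treats \eqref{eq:auto_effort_ode} as separable. It divides by $x$ and integrates $x'/x=-(\mu(E,a)+w)$ from $0$ to $a$. This is shorter, but dividing by $x$ presupposes that the solution never vanishes, which has two consequences. First, it silently excludes $p=0$, where the solution is $x\equiv0$. Second, strictly speaking it only shows that any \emph{nonvanishing} solution has the stated form, so the ``unique'' part of the claim is not really established by that argument.

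Your integrating-factor argument is the same device the paper itself uses for Proposition~\ref{prop:explicit_rate_profile}. It avoids any division and covers every $p\ge0$. By working in the class of absolutely continuous solutions, it also handles merely measurable coefficients, such as the indicator-type efforts used in the numerics. Finally, it yields uniqueness directly from $\frac{d}{da}(My)=0$ a.e. The separable route buys only brevity.

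Your caveat that $E$ must be held fixed is also well placed. Uniqueness holds for each given $E$, not for the self-consistent problem $E=\int_0^A x\dd a$. This matches how the paper actually uses the formula, namely the fixed-point iteration of Section~\ref{subsec:method}, but the statement leaves it implicit.
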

\begin{proof}
Equation \eqref{eq:auto_effort_ode} is separable; integrating
$x'/x=-(\mu(E,a)+w)$ from $0$ to $a$ with $x(0)=p$ gives
\eqref{eq:explicit_effort_profile}.
\end{proof}

\begin{proposition}[Adjoint ODE and explicit representation, rate-control]
\label{prop:explicit_adjoint}
Assume autonomous data and a stationary adjoint $\lambda(t,a)=\lambda(a)$,
$\eta(t,a)=\eta(a)$. Then the adjoint PDE
\eqref{eq:adjoint_theorem_PR} with $\lambda(t,A)=0$ reduces to
\begin{equation}\label{eq:auto_adjoint_ode}
\lambda'(a)=\bigl(r+\mu(a)\bigr)\lambda(a)-\eta(a),\qquad\lambda(A)=0,
\end{equation}
with unique solution
\begin{equation}\label{eq:auto_adjoint_formula}
\lambda(a)=\int_a^A\exp\!\left(-\int_a^s\bigl(r+\mu(\xi)\bigr)\dd\xi\right)
\eta(s)\dd s,\qquad 0\le a\le A.
\end{equation}
\end{proposition}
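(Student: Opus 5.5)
The plan has two parts: a reduction of the PDE to an ODE in age, followed by an integrating-factor computation that mirrors the proof of Proposition~\ref{prop:explicit_rate_profile}, but run backward from $a=A$.

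\textbf{Reduction.} Substitute $\lambda(t,a)=\lambda(a)$ and $\eta(t,a)=\eta(a)$ into \eqref{eq:adjoint_theorem_PR}. Since $\partial_t\lambda=0$ and $\mu$ is time-independent by \eqref{eq:auto_rate_data}, the left-hand side becomes $\lambda'(a)$ and the right-hand side becomes $(r+\mu(a))\lambda(a)-\eta(a)$. The terminal-age condition $\lambda(t,A)=0$ for all $t$ becomes $\lambda(A)=0$. This gives \eqref{eq:auto_adjoint_ode}. Under (R5) the derivative should be read in the $W^{1,1}$ (absolutely continuous) sense in $a$, so the ODE holds a.e.; the data $\mu\in L^\infty$ and $\eta\in L^1$ make that the natural class.

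\textbf{Solution formula.} Set $N(a)=\exp\bigl(-\int_0^a(r+\mu(\xi))\dd\xi\bigr)$. This function is absolutely continuous, strictly positive and bounded on $[0,A]$ because $\mu$ is bounded. The product rule for absolutely continuous functions gives
\[
\frac{d}{da}\bigl(N\lambda\bigr)=N\bigl(\lambda'-(r+\mu)\lambda\bigr)=-N\eta \quad\text{a.e.}
\]
Integrate this from $a$ to $A$ and use $\lambda(A)=0$:
\[
N(a)\lambda(a)=\int_a^A N(s)\eta(s)\dd s.
\]
Dividing by $N(a)$ and using $N(s)/N(a)=\exp\bigl(-\int_a^s(r+\mu)\bigr)$ yields \eqref{eq:auto_adjoint_formula}.

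For the converse, differentiate \eqref{eq:auto_adjoint_formula}. This is legitimate because $\int_a^A N\eta$ is absolutely continuous for $\eta\in L^1$. The computation shows that the formula does solve \eqref{eq:auto_adjoint_ode}.

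\textbf{Uniqueness.} Suppose two solutions differ by $d$. Then $d'=(r+\mu)d$ with $d(A)=0$, so $(Nd)'=0$. Hence $Nd$ is constant, and since it vanishes at $A$, we get $d\equiv0$. Grönwall's inequality gives the same conclusion.

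\textbf{Main obstacle.} The only delicate point is regularity: making sure the product-rule and fundamental-theorem steps are valid with $\mu\in L^\infty$ and $\eta\in L^1$ rather than continuous data. I would handle this by working throughout in the class of absolutely continuous functions, so the Carathéodory sense of the ODE applies.

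A consistency check is that the positivity of the kernel in \eqref{eq:auto_adjoint_formula} immediately gives $\lambda\ge0$ when $\eta\ge0$. This matches the interpretation of $\lambda$ as a shadow price generated only by the state constraint.
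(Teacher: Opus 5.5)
Your proposal is correct and follows the paper's proof essentially step for step. You set $\partial_t\lambda=0$, use the integrating factor $N(a)=\exp\bigl(-\int_0^a(r+\mu)\bigr)$ to obtain $(N\lambda)'=-N\eta$, integrate from $a$ to $A$ with $\lambda(A)=0$, and rewrite $N(s)/N(a)$; your converse check, explicit uniqueness argument, and absolute-continuity bookkeeping (with $\eta\in L^1(0,A)$, which the formula implicitly needs) are details the paper leaves implicit.
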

\begin{proof}
With $\partial_t\lambda=0$ the PDE reduces to \eqref{eq:auto_adjoint_ode}.
Writing it as $\lambda'-(r+\mu)\lambda=-\eta$ and using the integrating factor
$N(a)=\exp(-\int_0^a(r+\mu))$ gives $\frac{d}{da}(N\lambda)=-N\eta$.
Integrating from $a$ to $A$ with $\lambda(A)=0$ and dividing by $-N(a)$ yields
$\lambda(a)=\int_a^A\frac{N(s)}{N(a)}\eta(s)\dd s$, and
$N(s)/N(a)=\exp(-\int_a^s(r+\mu))$ gives \eqref{eq:auto_adjoint_formula}.
\end{proof}

\subsection{Role of the stationary formulas and economic interpretation}
\label{subsec:stationary_role}

We now make explicit how the stationary formulas connect to the optimal
control problem and what they mean economically.

\paragraph{They make the mechanism contrast quantitative.}
Formula \eqref{eq:explicit_rate_profile} is an \emph{affine} (Volterra-type)
functional of the harvesting control $u$: the stock is the unharvested
survival profile $p\,e^{-\int_0^a\mu}$ \emph{minus} a convolution of past
removals with the survival kernel $\exp(-\int_s^a\mu)$. Removal acts
\emph{additively} and can drive $x(a)$ to zero at finite age, which is exactly
where the state constraint $x\ge0$ and the multiplier $\eta$ of
Theorem~\ref{thm:necessary_PR} become active. By contrast,
\eqref{eq:explicit_effort_profile} is a \emph{purely multiplicative}
exponential: effort enters as an added mortality $w(\xi)$ inside the survival
exponent, so $x(a)>0$ for all finite $a$ whenever $p>0$. Thus the two
closed forms reproduce, at the level of stationary profiles, the
additive/affine/local vs.\ multiplicative/nonlinear/nonlocal dichotomy that
the adjoint analysis exhibits at the level of optimality systems.

\paragraph{They are stationary optimality objects, not merely state profiles.}
Formula \eqref{eq:auto_adjoint_formula} is the stationary shadow price
$\lambda(a)$: the discounted value, accrued from age $a$ to the maximal age
$A$, of relaxing the nonnegativity constraint, weighted by the survival-like
factor $\exp(-\int_a^s(r+\mu))$. Combined with the switching law
\eqref{eq:bangbang_R}, the pair
$\bigl(x(a),\lambda(a)\bigr)$ from
\eqref{eq:explicit_rate_profile}--\eqref{eq:auto_adjoint_formula} constitutes
a stationary solution of the necessary optimality system: harvesting is
switched on where $c(a)>\lambda(a)$ and off where $c(a)<\lambda(a)$. In this
sense the stationary formulas \emph{do} characterise the stationary
optimal-control problem, not just the open-loop state.

\paragraph{They are used directly in the numerics.}
The numerical study of Section~\ref{sec:numerics} evaluates
\eqref{eq:explicit_rate_profile}, \eqref{eq:explicit_effort_profile}, and
\eqref{eq:auto_adjoint_formula} (the latter integrated backward from $a=A$)
and compares them against finite-difference solutions, providing both a
validation of the schemes and the closed-form benchmark for the
yield/depletion comparison.

\paragraph{Economic reading of
\eqref{eq:explicit_rate_profile}--\eqref{eq:explicit_effort_profile}.}
Economically, \eqref{eq:explicit_rate_profile} says that under rate control
the manager pays a survival-discounted price for every unit removed and can,
in principle, exhaust an age class; the affine structure makes marginal yield
insensitive to current stock until the constraint binds, after which yield
saturates. Formula \eqref{eq:explicit_effort_profile} says that under effort
control the realised catch $w(a)x(a)$ is automatically throttled by the very
depletion it causes (since $x$ decays exponentially in cumulative effort),
which is the structural origin of the self-regulating, concave yield observed
numerically.

\section{Numerical results and discussion}
\label{sec:numerics}

We illustrate the analytical results numerically and first specify the discretisation.

\subsection{Parameters and numerical method}
\label{subsec:method}

Unless stated otherwise we use the parameters of
Table~\ref{tab:parameters}.

\begin{table}[htbp]
\centering
\caption{Baseline parameters used in the numerical study.}
\label{tab:parameters}
\begin{tabular}{@{}lll@{}}
\toprule
Symbol & Meaning & Value \\
\midrule
$A$ & maximal age & $10$ \\
$r$ & discount rate & $0.05$ \\
$\mu(a)$ & natural mortality (age law) & $0.01+0.005\,a$ \\
$\alpha$ & density-dependence coefficient (Model E) & $0.002$ \\
$p$ & stationary boundary inflow & $1$ \\
$[a_1,a_2]$ & harvesting window & $[3,7]$ (profiles), $[2,8]$ (sweeps) \\
$h$ & uniform harvesting intensity & $0.08$ (profiles); swept in $[0,0.5]$ \\
$N_a$ & age grid points (stationary) & $500$ \\
$N_a\times N_t$ & space--time grid (transient) & $200\times400$ \\
$u_{\max}$ & control bound (switching demo) & $0.15$ \\
$k$ & boundary inflow cost & $0.6$ \\
\bottomrule
\end{tabular}
\end{table}

\paragraph{Age discretisation.}
The interval $[0,A]$ is partitioned into $N_a$ equal cells of width
$\Delta a=A/N_a$ with nodes $a_i=i\,\Delta a$. Integrals (e.g.\ the aggregate
$E=\int_0^A x\dd a$ and the yields) are evaluated by the composite trapezoidal
rule on this grid.

\paragraph{Boundary condition.}
The inflow condition $x(t,0)=p(t)$ is imposed at the node $a_0=0$ at
every time level; for the stationary problems it sets the initial value
$x(0)=p$ of the age-ODE.

\paragraph{Transport scheme.}
For the stationary profiles the age-ODEs
\eqref{eq:auto_rate_ode}/\eqref{eq:auto_effort_ode} are integrated in $a$ by a
fourth-order Runge--Kutta march from $a=0$, which reproduces the closed forms
\eqref{eq:explicit_rate_profile}/\eqref{eq:explicit_effort_profile} to
machine-comparable accuracy and serves as a validation. For the
time-dependent rate-control run we use a first-order \emph{upwind}
finite-difference scheme for $\partial_t x+\partial_a x$ on the
$N_a\times N_t$ grid over $[0,T]$, $T=20$, with the CFL number
$\Delta t/\Delta a$ kept at or below $1$ for stability (here
$\Delta t=T/N_t$, $\Delta a=A/N_a$ give CFL $=(20/400)/(10/200)=1$).

\paragraph{Density-dependent mortality and the fixed-point solve (Model E).}
For Model E the mortality $\mu(E,a)=\mu(a)+\alpha E$ couples the profile to
its own aggregate. We resolve this by fixed-point iteration: starting from
$E^{(0)}=\int_0^A x_{\text{unharv}}\dd a$, we evaluate
\eqref{eq:explicit_effort_profile} with $E^{(m)}$, recompute
$E^{(m+1)}=\int_0^A x^{(m)}\dd a$, and iterate until
$|E^{(m+1)}-E^{(m)}|<10^{-10}$, which converges in a handful of iterations for
the parameters used (the map $E\mapsto\int x$ is a contraction for small
$\alpha$).

\paragraph{Adjoint solve and control update.}
The stationary adjoint \eqref{eq:auto_adjoint_ode} is integrated
\emph{backward} from $a=A$ with $\lambda(A)=0$ (RK4 in $-a$). Given $\lambda$,
the control is updated by the switching law \eqref{eq:bangbang_R}; for the
full coupled problem this is embedded in the forward--backward sweep of
Section~\ref{subsec:numerical_use_R}.

\paragraph{Enforcement of the state constraint.}
The nonnegativity constraint $x\ge0$ is enforced numerically by truncating the
realised extraction: at each age the removed amount is capped so that the
updated density cannot become negative, i.e.\ the effective rate is
$u_{\text{actual}}=\min\{u,\;x/\Delta a + \text{(survival inflow)}\}$ at the
discrete level. This is the numerical surrogate for the multiplier $\eta$ and
is exactly what produces the saturation of the rate-control yield reported
below.

\paragraph{Stopping criterion and grid convergence.}
Iterative solves (fixed point for $E$; sweeps for the coupled problem) stop
when successive iterates differ by less than $10^{-8}$ in the $L^1(0,A)$ norm.
We verified grid convergence by halving $\Delta a$ (from $N_a=250$ to $500$ to
$1000$): the stationary profiles and the aggregate $E$ change by less than
$0.2\%$ between the two finest grids, and the upwind transient solution
exhibits the expected first-order reduction of the error against the
characteristic solution under refinement.

\subsection{Stationary age profiles under harvesting}

Figure~\ref{fig:profiles} compares stationary age profiles for the two
mechanisms. In both, the boundary inflow is $p=1$ and harvesting is active on
$[3,7]$ with uniform intensity $h=0.08$. 
The prescribed harvesting profiles are
\begin{equation}\label{eq:test_profiles}
u(a)=h\,\mathbf 1_{[3,7]}(a)\quad(\text{Model R}),
\qquad
w(a)=h\,\mathbf 1_{[3,7]}(a)\quad(\text{Model E}),
\end{equation}
i.e.\ \emph{prescribed test profiles}, not computed optima: we fix a simple, identical nominal intensity in the
same age window for both models precisely so that the difference between the
two curves is attributable to the harvesting \emph{mechanism}
(additive removal $u$ vs.\ multiplicative effort $w$) and not to a difference
in the control. The rate-control profile is then obtained from
\eqref{eq:explicit_rate_profile} (equivalently, by RK4 integration of
\eqref{eq:auto_rate_ode}), and the effort-control profile from
\eqref{eq:explicit_effort_profile} with $E$ solved self-consistently as in
Section~\ref{subsec:method}.

\begin{figure}[htbp]
\centering
\includegraphics[width=\textwidth]{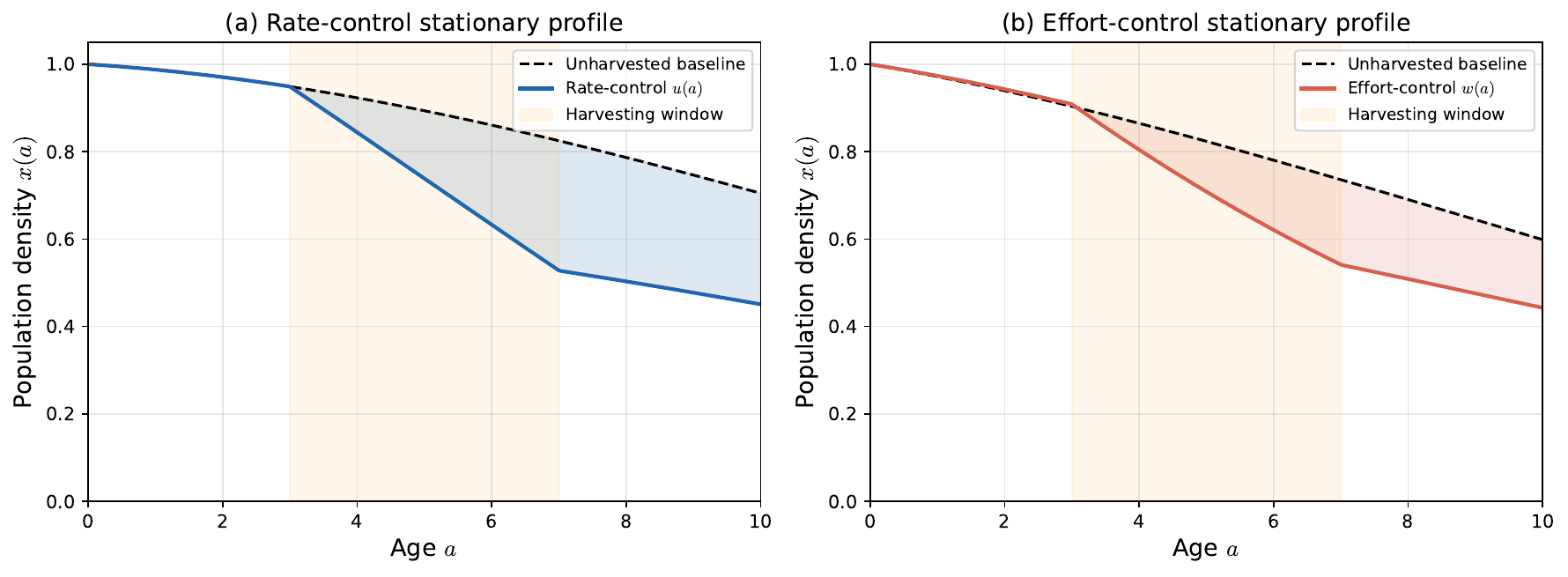}
\caption{Stationary age profiles. (a)~Rate-control: the additive removal
$u(a)=h\mathbf 1_{[3,7]}$ produces a visible change in slope at the window
boundaries $a=3,7$. (b)~Effort-control: the multiplicative effort
$w(a)=h\mathbf 1_{[3,7]}$ preserves the exponential structure and yields a
lower baseline density. Shaded region: the harvesting window $[3,7]$. Dashed
curves: the unharvested baseline ($h=0$). Both $u(a)$ and $w(a)$ are the
prescribed test profiles \eqref{eq:test_profiles}.}
\label{fig:profiles}
\end{figure}

For the effort-control model the density-dependent mortality is
$\mu(E,a)=\mu(a)+\alpha E$, $\alpha=0.002$, with $E=\int_0^A x\dd a$ solved by
fixed-point iteration. The rate-control profile shows the characteristic
slope change at $a=3,7$ (additive removal), while the effort-control profile
keeps a smooth exponential shape with a lower baseline.

\subsection{Time-dependent dynamics (rate-control)}

To show transient behaviour we solve the rate-control PDE
\eqref{eq:R_state}--\eqref{eq:R_ic} by the upwind scheme of
Section~\ref{subsec:method} on a $200\times400$ grid over $[0,20]$, with
periodic boundary inflow
\[
p(t)=0.5+0.3\sin(2\pi t/8)
\]
(seasonal restocking) and harvesting ramped linearly over $[0,5]$ to
$u=0.06$ on $[3,7]$. Figure~\ref{fig:dynamics} shows (a) the space--time
density heatmap with the diagonal transport structure, (b) age profiles at
selected times, and (c) the aggregate $E(t)$, which settles into oscillations
around a long-run mean, consistent with the periodic boundary forcing.

\begin{figure}[htbp]
\centering
\includegraphics[width=\textwidth]{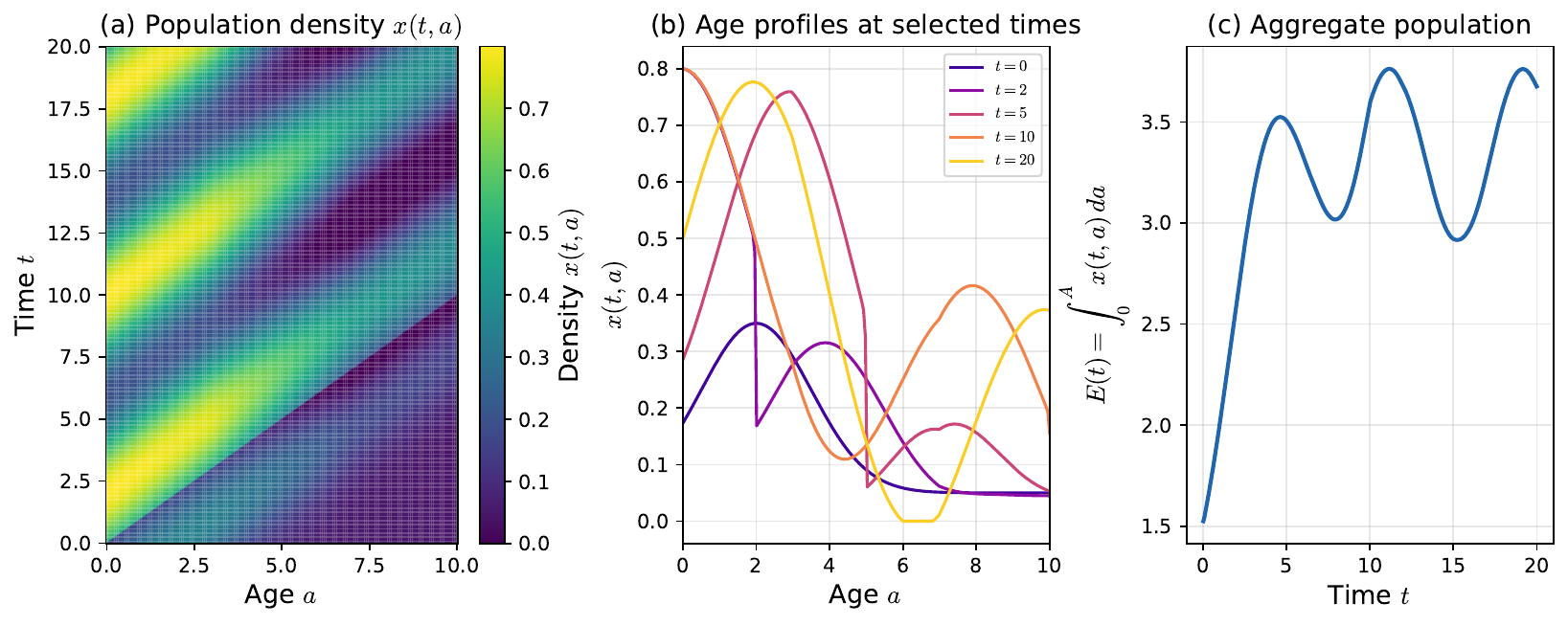}
\caption{Time-dependent dynamics of the rate-control model with periodic
boundary inflow and ramped harvesting. (a)~Space--time heatmap of the
density. (b)~Age profiles at selected times. (c)~Aggregate $E(t)$: initial
transient followed by quasi-periodic oscillations.}
\label{fig:dynamics}
\end{figure}

\subsection{Adjoint variable and switching structure}
\label{subsec:adjoint_numeric}

Figure~\ref{fig:adjoint} is an illustrative stationary example of the adjoint
and the switching law of Theorem~\ref{thm:necessary_PR}. We emphasise its
status: we do \emph{not} solve the full coupled
optimality system to optimality here. Instead, we prescribe a multiplier
$\eta(a)$, solve the stationary adjoint \eqref{eq:auto_adjoint_ode} backward
from $a=A$, and then \emph{apply} the switching law \eqref{eq:bangbang_R} to
obtain a control. The resulting control is therefore an \emph{approximate,
illustrative} control --- the ``bang-bang proxy'' --- that visualises how the
shadow price $\lambda$ drives the harvesting decision; it is the output of one
backward-adjoint-plus-switching step, not the converged optimum.

\begin{figure}[htbp]
\centering
\includegraphics[width=\textwidth]{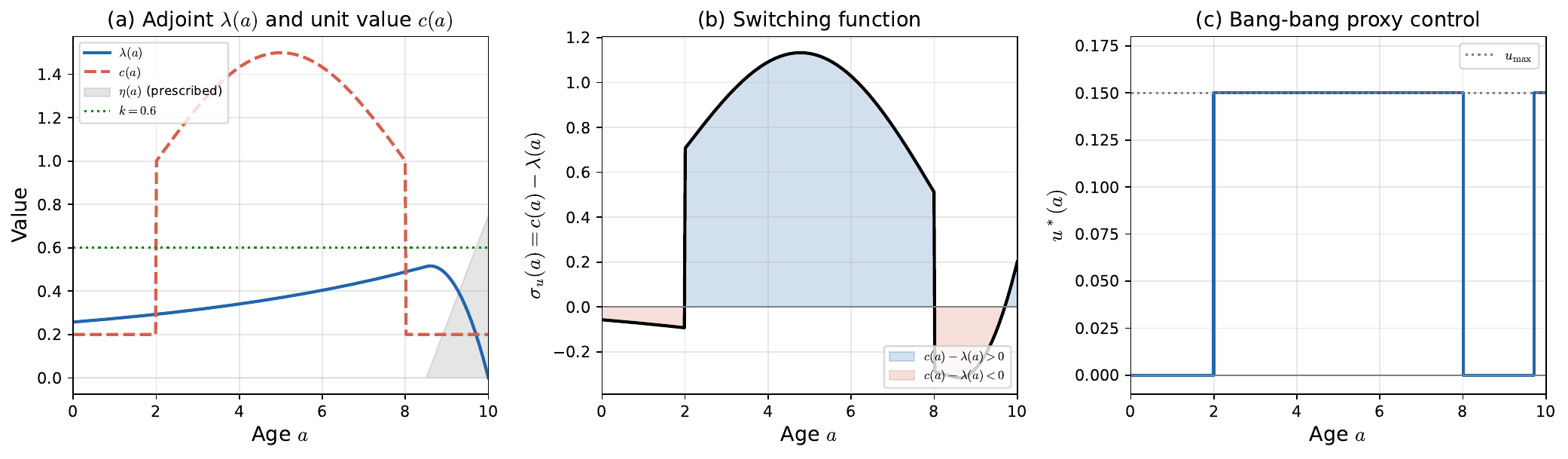}
\caption{Illustrative stationary adjoint and switching structure
(rate-control). (a)~Adjoint $\lambda(a)$, unit value $c(a)$, cost $k$, and the
prescribed multiplier $\eta(a)$ (active for $a\ge8.5$). (b)~Switching function
$\sigma_u(a)=c(a)-\lambda(a)$: positive where harvesting is favoured, negative
where conservation is favoured. (c)~The resulting bang-bang proxy control
$u^\ast(a)$, set to $u_{\max}=0.15$ where $\sigma_u$ is sufficiently positive
and to $0$ otherwise.}
\label{fig:adjoint}
\end{figure}

\paragraph{Why this $\eta(a)$.}
By complementary slackness (Theorem~\ref{thm:necessary_PR}(v)), $\eta>0$ only
where the constraint binds, $x=0$. In an age-structured population subject to
mortality and harvesting, the age classes most likely to be driven to
extinction are the \emph{oldest}, since mortality accumulates with age and the
survival factor $e^{-\int_0^a\mu}$ is smallest near $a=A$. We therefore
prescribe $\eta(a)>0$ for $a\ge 8.5$ (and $\eta\equiv0$ otherwise) as a
representative active-set pattern: it places the positive shadow value exactly
where the nonnegativity constraint is economically and demographically most
likely to bind. This is an illustrative choice consistent with complementary
slackness; it is not the multiplier of a specific computed optimum.

The unit harvesting value is taken as
\[
c(a)=1+0.5\sin\!\bigl(\pi(a-2)/6\bigr)\ \text{for }a\in[2,8],
\qquad c(a)=0.2\ \text{otherwise},
\]
and the boundary inflow cost is $k=0.6$.

Panel~(a) shows $\lambda(a)$ with $c(a)$, $k$, and $\eta(a)$; panel~(b) the
switching function $\sigma_u(a)=c(a)-\lambda(a)$; panel~(c) the bang-bang
proxy control obtained by thresholding $\sigma_u$.

\subsection{Rate-control versus effort-control: yield and depletion}

Finally, Figure~\ref{fig:comparison} compares the two mechanisms under the
\emph{same} baseline (Table~\ref{tab:parameters}) by sweeping a common
intensity $h\in[0,0.5]$ with control profiles supported on $[2,8]$, thereby
isolating the effect of the mechanism and the effect of the control bound. For Model R, $u(a)=h\,\mathbf 1_{[2,8]}(a)$ and the
stationary yield is
\[
Y_R(h)=\int_0^A u_{\text{actual}}(a)\dd a,
\]
with $u_{\text{actual}}$ the truncated extraction enforcing $x\ge0$. For
Model E, $w(a)=h\,\mathbf 1_{[2,8]}(a)$ and
\[
Y_E(h)=\int_0^A w(a)x(a)\dd a,
\]
with $x$ depending on $h$ through the steady state with
$\mu(E,a)=\mu(a)+\alpha E$, $\alpha=0.002$.

\begin{figure}[htbp]
\centering
\includegraphics[width=\textwidth]{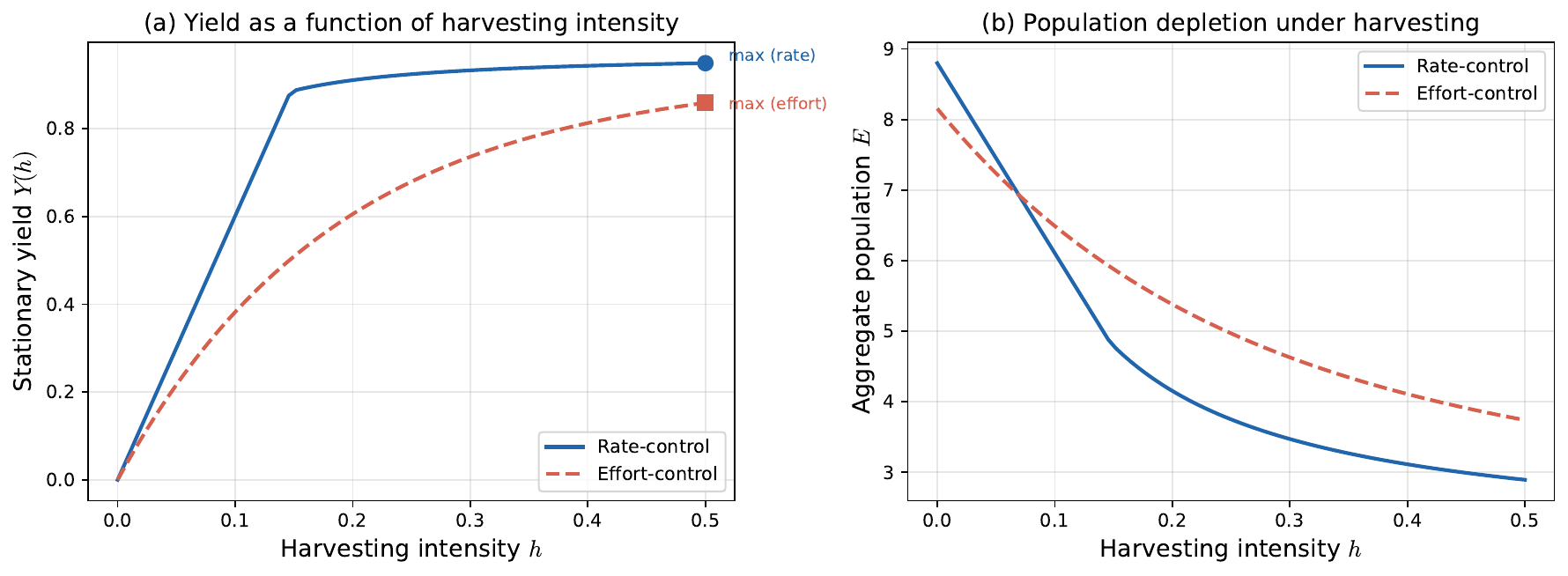}
\caption{Comparison of rate-control and effort-control harvesting under the
same baseline. (a)~Stationary yield vs.\ intensity: the rate-control yield
grows rapidly then saturates once the truncation enforcing $x\ge0$ activates
(the control bound/state constraint becomes binding), whereas the
effort-control yield is concave throughout. (b)~Aggregate depletion $E(h)$:
rate-control depletes the stock more aggressively at equal nominal intensity.}
\label{fig:comparison}
\end{figure}

\paragraph{Do the numerics support the theory?}
Yes, in three concrete ways. (i) The rate-control yield in panel~(a)
saturates precisely when the truncation enforcing $x\ge0$ activates ---
the numerical surrogate for the state-constraint multiplier $\eta$ becoming
positive --- which is the computational signature of complementary slackness
(Theorem~\ref{thm:necessary_PR}(v)). (ii) The concavity of the effort-control
yield and the milder depletion in panel~(b) are exactly what the
multiplicative profile \eqref{eq:explicit_effort_profile} predicts: catch
$w x$ is throttled by its own depletion. (iii) The stationary profiles of
Figure~\ref{fig:profiles} match the closed forms
\eqref{eq:explicit_rate_profile}/\eqref{eq:explicit_effort_profile}, and the
adjoint of Figure~\ref{fig:adjoint} matches \eqref{eq:auto_adjoint_formula},
validating the schemes. Together these confirm the additive/affine/local
vs.\ multiplicative/nonlinear/nonlocal dichotomy at the numerical level.

\section{Discussion and conclusion}
\label{sec:discussion}

We have compared two harvesting mechanisms within continuous-time
age-structured population dynamics. Juxtaposing rate-control and
effort-control, we have shown how the choice of management variable dictates
the analytical and structural properties of the resulting optimal control
problem. To respect the differing levels of rigour in the paper, we separate what is \emph{proven} from what is
\emph{formal}.

\paragraph{Proven results.}
For the rate-control model we proved existence of an optimal control for the
infinite-horizon discounted problem (Theorem~\ref{thm:existence_PR}),
exploiting the linearity of the objective in the controls, weak-$\ast$
compactness of the admissible box, and weak-$\ast$ closedness of the
state-constraint set. We established the closed-form stationary state and
adjoint representations
\eqref{eq:explicit_rate_profile}, \eqref{eq:explicit_effort_profile},
\eqref{eq:auto_adjoint_formula} (Propositions~\ref{prop:auto_rate_ode}--%
\ref{prop:explicit_adjoint}), and we gave sufficient conditions for the
infinite-horizon transversality limit (Lemma~\ref{lem:transversality_suff}).

\paragraph{Conditional results.}
The rate-control optimality system (Theorem~\ref{thm:necessary_PR}) is a
\emph{conditional} set of necessary conditions: it holds under the stated
regularity and constraint-qualification assumptions (R1)--(R7), which
postulate differentiability of the control-to-state map, the regularity (or
measure-valued nature) of the adjoint and multiplier, and a constraint
qualification. The adjoint is local in age, driven by the discount rate,
natural mortality, and the state-constraint multiplier; the controls obey
bang-bang switching laws.

\paragraph{Formal observations.}
For the effort-control model we gave a \emph{formal} derivation of the adjoint
equation (Proposition~\ref{prop:formal_effort_adjoint}). Its central feature
is the nonlocal term
\[
\int_0^A\mu_E(E(t),s)\,x(t,s)\,\lambda(t,s)\dd s,
\]
whose positive sign we verified by two independent derivations
(Section~\ref{subsec:sign_check}). This term couples the shadow prices across
all ages through the total stock. We do \emph{not} claim a maximum principle
for Model~E.

\paragraph{Bioeconomic reading.}
The analytical contrast is mirrored bioeconomically. The affine Volterra
representation \eqref{eq:explicit_rate_profile} shows that direct removal can
drive age classes to zero, requiring strict nonnegativity constraints and
favouring bang-bang strategies, with a yield that saturates once the
constraint binds. The multiplicative exponential
\eqref{eq:explicit_effort_profile} shows that effort scales extraction with
available stock, yielding a strictly concave yield curve and milder depletion
--- the classical self-regulating behaviour of effort-based management.

\paragraph{Limitations and future directions.}
First, the effort-control adjoint is formal; a rigorous infinite-horizon
maximum principle for this nonlinear, nonlocally coupled problem --- including
existence of an optimiser and the precise transversality analysis --- remains
open. 
Second, a fully unconditional rate-control maximum principle would need
the measure-valued multiplier of Remark~\ref{rem:measure_multiplier} and a
rigorous constraint qualification.
Third, our framework is deterministic;
incorporating demographic or environmental stochasticity, building on recent
well-posedness results for stochastic structured systems, is a natural next
step. Finally, extending to multi-species age-structured predator--prey
dynamics or spatial heterogeneity would let the nonlocal adjoint structures
identified here play their role in higher-dimensional bioeconomic models.

\bibliography{references}

\end{document}